\crefname{subsection}{Subsection}{subsections}
\newtheorem{theorem}{Theorem}
\newtheorem{thm}{Theorem}[section]
\newtheorem{lem}[thm]{Lemma}
\newtheorem{prop}[thm]{Proposition}
\theoremstyle{definition}
\newtheorem{defn}[thm]{Definition}
\newtheorem{notn}[thm]{Notation}
\newtheorem{remark}[thm]{Remark}
\newcommand{\ncmd}{\newcommand}
\definecolor{DefColor}{rgb}{0.6,0.15,0.25}
\newcommand{\mdef}[1]{\textcolor{DefColor}{#1}}
\newcommand{\tdef}[1]{\mdef{\emph{#1}}}
\ncmd{\mbb}[1]{\mathbb{#1}}
\ncmd{\mrm}[1]{\mathrm{#1}}
\ncmd{\mcl}[1]{\mathcal{#1}}
\ncmd{\mfk}[1]{\mathfrak{#1}}
\ncmd{\mbf}[1]{\mathbf{#1}}
\ncmd{\mscr}[1]{\mathscr{#1}}
\ncmd{\todo}[1]{\textbf{TODO #1}}
\ncmd{\reftodo}[1]{\textbf{REF #1}}
\DeclareRobustCommand{\minwidthbox}[2]{%
  \mathmakebox[\ifdim#2<\width\width\else#2\fi]{#1}%
}
\ncmd{\too}[1][]{\xrightarrow{\minwidthbox{#1}{1em}}}
\ncmd{\iso}{\too[\smash{\raisebox{-0.5ex}{\ensuremath{\scriptstyle\sim}}}]}
\ncmd{\oot}[1][]{\xleftarrow{\minwidthbox{#1}{1em}}}
\ncmd{\hooktoo}[1][]{\xhookrightarrow{\minwidthbox{#1}{1em}}}
\ncmd{\adj}[1][]{\mathrel{\substack{\xrightarrow{\minwidthbox{#1}{1em}} \\[-.7ex] \xleftarrow{\minwidthbox{#1}{1em}}}}}
\ncmd{\qin}{\quad\in\quad}
\ncmd{\Id}{\mrm{Id}}
\ncmd{\tId}{\mbf{Id}}
\ncmd{\Nm}{\mathrm{Nm}}
\ncmd{\BB}{\mrm{B}}
\ncmd{\CC}{\mcl{C}}
\ncmd{\DD}{\mcl{D}}
\ncmd{\EE}{\mcl{E}}
\ncmd{\tA}{\mbf{A}}
\ncmd{\trA}{\tA}
\ncmd{\tB}{\mbf{B}}
\ncmd{\cK}{\mcl{K}}
\ncmd{\cX}{\mcl{X}}
\ncmd{\lsF}{\mscr{F}}
\ncmd{\lsG}{\mscr{G}}
\ncmd{\LL}{\mrm{L}}
\ncmd{\RR}{\mrm{R}}
\ncmd{\BC}{\mrm{BC}}
\ncmd{\op}{\mrm{op}}
\ncmd{\co}{2\text{-}\mrm{op}}
\ncmd{\trop}{3\text{-}\mrm{op}}
\ncmd{\pt}{\mrm{pt}}
\ncmd{\Mod}{\mrm{Mod}}
\ncmd{\Spaces}{\mcl{S}}
\ncmd{\SpacesK}{\Spaces_\cK}
\ncmd{\Span}{\mrm{Span}}
\ncmd{\tSpan}{\mbf{Span}}
\ncmd{\thSpan}{\tSpan}
\ncmd{\Spano}{\Span_1}
\ncmd{\Spanoh}{\tSpan_{1\!\frac{1}{2}}}
\ncmd{\Spant}{\tSpan_2}
\ncmd{\Spanth}{\thSpan_{2\!\frac{1}{2}}}
\ncmd{\Spantr}{\thSpan_{3}}
\ncmd{\Cat}{\mrm{Cat}}
\ncmd{\CatK}{\Cat_\cK}
\ncmd{\CatKlim}{\Cat^\cK}
\ncmd{\Catmfin}{\Cat_{m\text{-}\mrm{fin}}}
\ncmd{\CAT}{\widehat{\Cat}}
\ncmd{\CATall}{\CAT_{\mrm{all}}}
\ncmd{\Spacesmfin}{\Spaces_{m\text{-}\mrm{fin}}}
\ncmd{\tCat}{\mbf{Cat}}
\ncmd{\tCatK}{\tCat_\cK}
\ncmd{\tCatKp}{\tCat_{\cK'}}
\ncmd{\tCatKlim}{\tCat^\cK}
\ncmd{\tCatKzsa}{\tCat^{\oplus\text{-}\cK_0}}
\ncmd{\tCatmfin}{\tCat_{m\text{-}\mrm{fin}}}
\ncmd{\thCatt}{\mbf{Cat}_2}
\ncmd{\PrL}{\mrm{Pr}^{\LL}}
\ncmd{\yon}{\text{\usefont{U}{min}{m}{n}\symbol{'110}}}
\DeclareFontFamily{U}{min}{}
\DeclareFontShape{U}{min}{m}{n}{<-> dmjhira}{}
\DeclareMathOperator{\Tw}{Tw}
\DeclareMathOperator{\Map}{Map}
\DeclareMathOperator{\End}{End}
\DeclareMathOperator{\tEnd}{\mbf{End}}
\DeclareMathOperator{\Fun}{Fun}
\DeclareMathOperator{\tFun}{\mbf{Fun}}
\DeclareMathOperator*{\colim}{colim}
\DeclareMathOperator{\coliminl}{colim}
\DeclareMathOperator{\PSh}{\mcl{P}}
\DeclareMathOperator{\PShK}{\PSh^\cK}
\ncmd\noloc{%
  \nobreak
  \mspace{6mu plus 1mu}
  {:}
  \nonscript\mkern-\thinmuskip
  \mathpunct{}
  \mspace{2mu}
}
\title{Categorical Ambidexterity}
\author{Shay Ben-Moshe\thanks{Faculty of Mathematics and Computer Science, Weizmann Institute of Science, Israel.} \thanks{Max Planck Institute for Mathematics, Bonn, Germany.}}
\date{}
\begin{document}
	\maketitle
	
	\begin{abstract}
		We prove an ambidexterity result for $\infty$-categories of $\infty$-categories admitting a collection of colimits.
		This unifies and extends two known phenomena:
		the identification of limits and colimits of presentable $\infty$-categories indexed by a space,
		and the $\infty$-semiadditivity of the $\infty$-category of $\infty$-categories with $\pi$-finite colimits proven by Harpaz.
		Our proof employs Stefanich's universal property for the higher category of iterated spans, which encodes ambidexterity phenomena in a coherent fashion.
	\end{abstract}

	
	\tableofcontents

\section{Introduction}

\subsection{Main Results}

Throughout this paper, we use the terms category to mean an $\infty$-category, $2$-category to mean an $(\infty, 2)$-category, and $3$-category to mean an $(\infty, 3)$-category.

There are two important known results about ambidexterity in categories of categories:

In the category of presentable categories $\PrL$, limits and colimits indexed by (small) spaces are canonically isomorphic.
This follows from the more general fact that a limit of a diagram in $\PrL$ is computed as in $\CAT$, and the colimit is computed by passing to the right adjoint maps and computing the limit in $\CAT$, as proven by Lurie \cite[Proposition 5.5.3.13 and Theorem 5.5.3.18]{HTT} (see also \cite[Example 4.3.11]{HL}).

Let $\Catmfin$ denote the category of categories admitting $m$-finite colimits and functors preserving them.
Harpaz showed that it is $m$-semiadditive, i.e.\ that limits and colimits indexed by $m$-finite spaces are canonically isomorphic \cite[Proposition 5.26]{Harpaz}.
His proof is based on a comparison with the universal $m$-semiadditive category of spans of $m$-finite spaces $\Spano(\Spacesmfin)$.

The purpose of this paper is to unite these phenomena and provide a uniform proof.
Let $\cK_0 \subset \Spaces$ be a full subcategory of spaces including the point and closed under pullbacks, and let $\cK_0 \subset \cK \subset \Cat$ be any collection of categories containing $\cK_0$.
We let $\CatK$ denote the category of categories admitting $\cK$-indexed colimits and functors preserving them.

\begin{theorem}[{\cref{lim-colim}}]\label{lim-colim-intro}
	Let $\CC_\bullet\colon X \to \CatK$ be a diagram indexed by a space $X \in \cK_0$.
	There is a canonical equivalence
	\[
		\colim_X \CC_\bullet \iso \lim_X \CC_\bullet
		\qin \CatK.
	\]
\end{theorem}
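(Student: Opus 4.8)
The plan is to reinterpret $\colim_X$ and $\lim_X$ as the two adjoints of the constant-diagram functor and then to force these adjoints to coincide by transporting the ambidexterity intrinsic to spans. Writing $\pi\colon X \to \pt$ for the projection, the functors $\colim_X$ and $\lim_X$ from $\Fun(X,\CatK)$ to $\CatK$ are respectively the left and right adjoints of $\pi^* = \Delta\colon \CatK \to \Fun(X,\CatK)$; here I regard $\Fun(X,\CatK)$ as an object of the (large) $2$-category $\tCatK$, with $\cK$-colimits computed pointwise. It therefore suffices to produce an equivalence $\colim_X \iso \lim_X$ of functors in $\tCatK$; evaluating at the given diagram $\CC_\bullet$ and invoking uniqueness of adjoints then yields the canonical equivalence of the theorem.

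First I would organise the local systems $X \mapsto \Fun(X,\CatK)$ into a single functor. Restriction along maps of spaces gives a functor $\Spaces_{\cK_0}^{\op} \to \tCatK$, $f \mapsto f^*$; since $\CatK$ admits $\cK_0$-indexed colimits (and $\cK_0 \subseteq \cK$), each $f^*$ has a left adjoint $f_!$ computed by left Kan extension, and for every pullback square of spaces in $\cK_0$ the Beck--Chevalley comparison $\BC$ is an equivalence. This adjointable, base-change datum is precisely the input required by Stefanich's universal property, which I would invoke to extend it to a functor of $2$-categories
\[
	F\colon \tSpan(\Spaces_{\cK_0}) \to \tCatK, \qquad X \mapsto \Fun(X,\CatK),
\]
under which a forward leg acts by $f_!$ and a backward leg by $f^*$. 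Here $\Spaces_{\cK_0}$ being closed under pullbacks and containing $\pt$ is what makes $\tSpan(\Spaces_{\cK_0})$ available, and the hypothesis $X \in \cK_0$ guarantees that $X$ is one of its objects.

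The payoff comes from the self-duality of spans. Let $[\pi]\colon X \to \pt$ denote the span $X \xleftarrow{\Id} X \xrightarrow{\pi} \pt$ and $[\pi]^\vee\colon \pt \to X$ its reverse $\pt \xleftarrow{\pi} X \xrightarrow{\Id} X$, so that $F([\pi]) \simeq \colim_X$ and $F([\pi]^\vee) \simeq \Delta$. In any span $2$-category the reverse of a morphism is simultaneously its left and its right adjoint --- the unit and counit being spans of spans induced by the diagonal of $X$, with the triangle identities reducing to pullback computations --- so both $[\pi] \dashv [\pi]^\vee$ and $[\pi]^\vee \dashv [\pi]$ hold in $\tSpan(\Spaces_{\cK_0})$. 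Since $F$ preserves adjunctions, the first adjunction transports to the familiar $\colim_X \dashv \Delta$, while the second gives $\Delta \dashv \colim_X$. Thus $\colim_X$ is also a right adjoint of $\Delta$; as the right adjoint of $\Delta$ is by definition $\lim_X$, uniqueness of adjoints yields $\colim_X \iso \lim_X$.

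The main obstacle is the construction of $F$, that is, the verification of Stefanich's hypotheses at the full $2$-categorical (indeed iterated-span) level. The underlying $1$-categorical Beck--Chevalley condition for spaces is routine, but upgrading it to a functor of $2$-categories requires all of the higher coherences packaged by the iterated-span formalism, and one must confirm that the adjoints in play genuinely live in $\tCatK$ --- in particular that $f_!$ and the structural unit and counit functors preserve $\cK$-colimits. Selecting the correct level of Stefanich's iterated spans and matching its universal property to the adjoint structure available in $\tCatK$ is the technical heart of the argument; granting $F$, the ambidexterity is then formal.
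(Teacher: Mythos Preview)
Your high-level strategy is exactly the paper's: build a functor out of a span category and transport the ambidexterity of spans. The gap is that you conflate two different span $2$-categories, and the mismatch is fatal.

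The datum you verify---each $f^*$ has a left adjoint $f_!$ and pullback squares satisfy Beck--Chevalley---is precisely the input for the universal property of $\Spanoh(\cK_0)$, the $2$-category whose $2$-morphisms are \emph{maps} of spans. But in $\Spanoh(\cK_0)$ only one of the two adjunctions holds: $[\pi] \dashv [\pi]^\vee$ (unit the diagonal $X \to X \times X$, counit $X \to \pt$, both genuine maps of spans), which under $F$ recovers only the tautology $\colim_X \dashv \Delta$. The adjunction $[\pi]^\vee \dashv [\pi]$ would require a unit $\pt \to X$ and a counit $X \times X \to X$ over $X \times X$, neither of which exists as a map of spans. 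Your own description of the unit and counit as ``spans of spans induced by the diagonal'' shows you really mean $\Spant(\cK_0)$ (or $\Spanth(\cK_0)$), where $2$-morphisms are spans of spans; there both adjunctions do hold. But ordinary Beck--Chevalley does \emph{not} produce a functor out of $\Spant$.

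To map out of $\Spanth(\cK_0)$ one needs Stefanich's $2$-fold Beck--Chevalley condition: in addition to ordinary adjointability, the counit $c_f\colon f_! f^* \to \Id$ must itself be a left adjoint in the Hom-$2$-category $\tEnd(F(Y))$, and a further square of counits must be adjointable there. This forces the target to be a genuine $3$-category: if you land in the $2$-category $\tCatK$ as you propose, then $\tEnd(F(Y))$ is a $1$-category and ``$c_f$ is a left adjoint'' degenerates to ``$c_f$ is invertible'', i.e.\ $f^*$ is fully faithful---false for general maps of spaces. The paper therefore targets $\thCatt$ via $X \mapsto \tCatK^X$, and the verification that $c_f$ is a left adjoint in $\tEnd(\tCatK^Y)$ (\cref{cf-adj}) is the technical core. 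That verification is not formal: it reduces, fibrewise, to the constant-diagram case $\CC[X] \simeq \CC^X$, which is established separately via the comparison of $\SpacesK[-]$ and $\SpacesK^{(-),\LL}$ with $\PShK$. So ``granting $F$'' is essentially granting the theorem.
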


As an example, considering large categories instead of small ones, letting $\cK_0$ be the collection of all small spaces and letting $\cK$ be the collection of all small categories, we get that limits and colimits indexed by spaces coincide already in the category of cocomplete categories $\CATall$.
This implies the same for the full subcategory $\PrL$ (see \cref{prl}).
Similarly, taking $\cK = \cK_0 = \Spacesmfin$ to be the collection of $m$-finite spaces recovers the result about $\Catmfin$.

A specific interesting case of \cref{lim-colim-intro} is when the diagram is constant on a category $\CC \in \CatK$.
In this case, we get an equivalence
\[
	\CC[X] := \colim_X \CC \iso \lim_X \CC =: \CC^X
	\qin \CatK.
\]
One may wonder about the functoriality of this equivalence in $X \in \cK_0$.
Observe that the left-hand side is canonically covariantly functorial in $X$, while the right-hand side is canonically contravariantly functorial by pre-composition.
Since $\CC$ is assumed to have $\cK$-indexed colimits, the pre-composition functoriality admits a left adjoint functoriality, given by left Kan extension.
Generalizing a result of Carmeli--Cnossen--Ramzi--Yanovski on the case of $\PrL$ from \cite[\S4.1]{CatChar}, we show that these functorialities are adjoint to one another.

\begin{theorem}[{\cref{constant}}]\label{constant-intro}
	Let $\CC \in \CatK$.
	There is a canonical equivalence
	\[
		\CC[X] \simeq \CC^X
		\qin \CatK
	\]
	natural in $X \in \cK_0$ for the left Kan extension functoriality on the right-hand side.
\end{theorem}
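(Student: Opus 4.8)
The plan is to refine the pointwise equivalences of \cref{lim-colim-intro} into a natural equivalence of two functors $\cK_0 \to \CatK$, by a comparison of adjoints in the spirit of \cite[\S4.1]{CatChar}, but replacing their $\PrL$-specific input with the span machinery already used to prove \cref{lim-colim-intro}. Write $\Phi\colon \cK_0 \to \CatK$ for $X \mapsto \CC[X] = \colim_X \CC$ equipped with its tautological colimit push-forward functoriality $f \mapsto f_*^{\mathrm{colim}}$, and $\Psi\colon \cK_0 \to \CatK$ for $X \mapsto \CC^X = \lim_X \CC \simeq \Fun(X,\CC)$ equipped with the left Kan extension functoriality $f \mapsto f_!$. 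Since $\cK_0 \subseteq \cK$, the category $\CC$ admits $\cK_0$-indexed colimits, so each restriction $f^*\colon \CC^Y \to \CC^X$ preserves them and acquires a left adjoint $f_!$; thus $\Psi$ is precisely the left-adjoint functoriality obtained from the contravariant functor $R\colon \cK_0^\op \to \CatK$, $X \mapsto \CC^X$, $f \mapsto f^*$. The target is an equivalence $\Phi \simeq \Psi$ refining \cref{lim-colim-intro}.

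The first step is the pointwise input. For a map $f\colon X \to Y$ in $\cK_0$ the fibers of $f$ are pullbacks of points along $f$ and hence lie in $\cK_0$; applying \cref{lim-colim-intro} fiberwise therefore yields a norm equivalence $\Nm_f\colon f_! \iso f_*$ between the two Kan extensions along $f$ on all of $\CatK^X$. Combining $\Nm_f$ with the adjunctions $f_! \dashv f^* \dashv f_*$ and with the absolute equivalence $\theta_X\colon \CC[X] \iso \CC^X$ of \cref{lim-colim-intro}, a diagram chase with the norm equivalences shows that, after transporting along $\theta_X$ and $\theta_Y$, the colimit push-forward $f_*^{\mathrm{colim}}$ becomes left adjoint to the restriction $f^*\colon \CC^Y \to \CC^X$. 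Since $f_! = \Psi(f)$ is by definition the left adjoint of $f^*$, uniqueness of adjoints identifies $\theta_Y \circ f_*^{\mathrm{colim}} \circ \theta_X^{-1}$ with $f_!$, which is exactly the commutation of the naturality square for the single map $f$.

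The remaining and \emph{main} difficulty is to promote these individual squares to a genuinely coherent natural equivalence in $X \in \cK_0$, producing the higher data rather than checking naturality edge by edge. Here I would lean on the functor out of the span category furnished by the proof of \cref{lim-colim-intro}: that argument produces a single $\mathrm{E}_\CC\colon \Span(\cK_0) \to \CatK$ with $\mathrm{E}_\CC(X) = \CC^X$, sending a backward map to restriction $f^*$ and a forward map to its left adjoint $f_!$, with the Beck--Chevalley coherences built in from the universal property of spans. Restricting $\mathrm{E}_\CC$ along the forward inclusion $\cK_0 \hookrightarrow \Span(\cK_0)$ recovers $\Psi$ on the nose, while the pointwise identification above says that, transported across $\theta$, the \emph{same} forward functoriality is the colimit push-forward, i.e.\ recovers $\Phi$. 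In this way $\Phi$ and $\Psi$ appear as two presentations of the one forward part of $\mathrm{E}_\CC$, and the natural equivalence $\Phi \simeq \Psi$ is read off from this coherent functor instead of being assembled coherence by coherence. I expect the genuine work to be the bookkeeping that passing to left adjoints of $R$ is functorial inside the $(\infty,2)$-category $\tCatK$, so that it indeed produces $\Psi$, and that the resulting covariant functoriality matches the forward functoriality of $\mathrm{E}_\CC$; the span formalism of \cref{lim-colim-intro} is precisely what supplies the coherence that makes the pointwise comparison of the previous paragraph natural.
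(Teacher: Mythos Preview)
Your proposal runs the paper's logic in reverse. You take \cref{lim-colim-intro} and its span machinery as input, but in this paper \cref{lim-colim-intro} is the \emph{final} consequence: it is deduced from \cref{main-thm-intro}, whose proof (via \cref{cf-adj}) already uses \cref{constant-intro}. So within the paper's development your argument is circular; there is no independent proof of \cref{lim-colim-intro} here to lean on.

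The paper's route is quite different and never invokes norm maps. It first settles $\CC = \SpacesK$ directly by identifying both $\SpacesK[-]$ and $\SpacesK^{(-),\LL}$ with $\PShK$ (\cref{colim-PK} and \cref{lim-PK}); the second identification is the real content and uses the Yoneda density theorem together with $\Tw(X) \simeq X$ for $X$ a space. Since $\PShK$ is symmetric monoidal, \cref{univ-1.5-sm} upgrades this to $\SpacesK[-]^\vee \simeq \SpacesK[-]^\RR$ as functors $\cK_0 \to \tCatK$ (\cref{dual-R}). The general case then follows formally: write $\CC[-] \simeq \SpacesK[-] \otimes_\cK \CC$, use that $-\otimes_\cK \CC$ is a $2$-functor to commute it past $(-)^\RR$, and unwind duality and the adjunction between $\PShK$ and the inclusion $\CatK \subset \Cat$ to arrive at $\CC^{(-)}$.

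Even if \cref{lim-colim-intro} were granted as a black box, your coherent step has a gap. The $2$-functor $\mathrm{E}_\CC$ out of $\Spanoh(\cK_0)$ (which does exist, by \cref{adjointable} and \cref{univ-1.5}) has forward part $\Psi$ by construction, but carries no information about $\Phi$; asserting that its forward part ``is also $\Phi$ after transport along $\theta$'' is precisely the naturality you are trying to prove. Your edgewise identification of both push-forwards as left adjoints of $f^*$ is correct for a single $f$ but supplies none of the higher coherences.
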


As one might expect, this functoriality in $X$ suitably extends to non-constant diagrams.
To encapsulate all functorialities in a coherent form, we employ several higher categorical structures.
Recall that $\CatK$ enhances to a $2$-category $\tCatK$.
Let $\thCatt$ denote the $3$-category of $2$-categories, and consider the functor
\[
	\tCatK^{(-)}\colon \cK_0^\op \too \thCatt,
\]
sending $X$ to $\tCatK^X$ and $f\colon X \to Y$ to $f^*\colon \tCatK^Y \to \tCatK^X$.
We denote by $\Spanth(\cK_0)$ the $3$-category of iterated spans, whose objects are objects of $\cK_0$, $1$-morphisms are spans, $2$-morphisms are spans between spans, and $3$-morphisms are maps of spans between spans, as depicted below.
\[\begin{tikzcd}[column sep=1.1em,row sep=small]
	&& Z &&& Z &&&& Z \\
	X & X && Y & X & Q & Y & X & Q && P & Y \\
	&&&&& W &&&& W
	\arrow[from=1-3, to=2-2]
	\arrow[from=1-3, to=2-4]
	\arrow[from=1-6, to=2-5]
	\arrow[from=1-6, to=2-7]
	\arrow[from=1-10, to=2-8]
	\arrow[from=1-10, to=2-12]
	\arrow[from=2-6, to=1-6]
	\arrow[from=2-6, to=3-6]
	\arrow[from=2-9, to=1-10]
	\arrow[from=2-9, to=2-11]
	\arrow[from=2-9, to=3-10]
	\arrow[from=2-11, to=1-10]
	\arrow[from=2-11, to=3-10]
	\arrow[from=3-6, to=2-5]
	\arrow[from=3-6, to=2-7]
	\arrow[from=3-10, to=2-8]
	\arrow[from=3-10, to=2-12]
\end{tikzcd}\]
In \cite[Theorem 4.2.6]{Stef}, Stefanich showed that $\Spanth(\cK_0)$ has a universal property (recalled in \cref{rec-span}), which we use to prove our main result.

\begin{theorem}[{\cref{main-thm}}]\label{main-thm-intro}
	The functor above extends to a $3$-functor
	\[
		\tCatK^{(-)}\colon \Spanth(\cK_0) \too \thCatt.
	\]
\end{theorem}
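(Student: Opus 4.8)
The plan is to deduce the statement from Stefanich's universal property of $\Spanth(\cK_0)$, recalled in \cref{rec-span}. That universal property characterises $3$-functors out of $\Spanth(\cK_0)$ in terms of a finite amount of adjointability data: to extend a functor $\cK_0^\op \to \thCatt$ to the $2\tfrac12$-category of iterated spans, one must equip each structure morphism with an adjoint and check that the Beck--Chevalley cells attached to pullback squares are invertible, at the levels permitted by a target $3$-category. Thus the whole problem reduces to verifying, for $\tCatK^{(-)}$, that (i) each restriction $f^*$ admits a left adjoint in $\thCatt$, and (ii) the base-change transformations attached to pullbacks of spaces are equivalences; the universal property then assembles these into the coherent $3$-functor, with the intermediate ``$\tfrac12$''-stages of Stefanich's tower governing the passage through the spans-of-spans that form the $2$-morphisms.

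First I would produce the left adjoints. For $f\colon X \to Y$ in $\cK_0$, an object of $\tCatK^X$ is an $X$-indexed family $\CC_\bullet$ of objects of $\tCatK$, and the candidate left adjoint is the left Kan extension $f_!$, computed fibrewise by $(f_!\CC_\bullet)_y \simeq \colim_{f^{-1}(y)}\CC_\bullet$. Because $\cK_0$ is closed under pullbacks, each fibre $f^{-1}(y) \simeq \{y\}\times_Y X$ again lies in $\cK_0 \subseteq \cK$, so this colimit is among those along which objects of $\tCatK$ are cocomplete and $f_!$ is well defined. As left Kan extension is adjoint to restriction at the level of functor $2$-categories, this yields an honest adjunction $f_! \dashv f^*$ internal to the $3$-category $\thCatt$, with unit and counit realised as $2$-morphisms therein. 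This is exactly the data the span $X \xleftarrow{a} Z \xrightarrow{b} Y$ requires: it is sent to the composite $b_!\,a^*\colon \tCatK^X \to \tCatK^Y$.

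Next I would verify Beck--Chevalley. Given a pullback square $W = Y'\times_Y X$ in $\cK_0$ with projections $q\colon W \to X$ and $p\colon W \to Y'$ lying over $f\colon X \to Y$ and $g\colon Y' \to Y$, the canonical comparison $p_!\,q^* \simeq g^*\,f_!$ is an equivalence: evaluating at $y' \in Y'$, both sides compute the colimit over $f^{-1}(g(y')) \simeq p^{-1}(y')$, the fibres being identified by the pullback. This base change is precisely what makes composition of spans well defined after applying $\tCatK^{(-)}$. The same fibrewise analysis, run one categorical level higher on the spans-of-spans constituting the $2$-morphisms of $\Spanth(\cK_0)$, supplies the adjoints and base-change equivalences required at that layer; and the truncation at ``$2\tfrac12$'' is what keeps this finite, since the $3$-morphisms are plain maps of spans and demand no further adjointability inside $\thCatt$.

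The main obstacle is the $3$-categorical coherence rather than any single verification. In contrast to the classical span calculus valued in a $1$-category, here every adjunction and every Beck--Chevalley cell must be witnessed inside $\thCatt$, whose objects are $2$-categories and whose $1$-morphisms are $2$-functors, so that the adjunctions at the first and second span levels are recorded by genuine $2$- and $3$-morphisms. Constructing all of these coherences by hand would be intractable; the role of Stefanich's universal property is exactly to collapse this infinite coherence problem onto the finite checklist above. The real work, accordingly, lies in fitting our situation to the hypotheses of \cref{rec-span} --- confirming that adjointability and base change are demanded precisely at the levels where the cocompleteness of $\tCatK$ supplies them --- after which the desired $3$-functor is produced automatically. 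As a sanity check, the self-duality of spans forces the left and right adjoints of $f^*$ to coincide, so that for $f\colon X \to \pt$ the resulting $3$-functor recovers the ambidexterity equivalence $\colim_X \simeq \lim_X$ of \cref{lim-colim}.
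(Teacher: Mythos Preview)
Your overall strategy is right---invoke Stefanich's universal property \cref{univ-2.5} and verify the relevant adjointability and Beck--Chevalley conditions---and your treatment of the first level (existence of $f_!$ and the ordinary Beck--Chevalley isomorphism for pullback squares) is correct and matches the paper. The gap is at the second level, which you dispatch with the sentence ``the same fibrewise analysis, run one categorical level higher \ldots\ supplies the adjoints and base-change equivalences required at that layer''. This is where the substantive content lies, and it is \emph{not} a repetition of the first-level argument.

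The $2$-fold Beck--Chevalley condition requires that the counit $c_f\colon f_! f^* \to \Id_{\tCatK^Y}$, viewed as a $1$-morphism in the $2$-category $\tEnd(\tCatK^Y)$, itself admit a right adjoint, and that a certain square of such counits be right adjointable. Evaluating at $\CC_\bullet \in \tCatK^Y$ and at $y \in Y$, one finds $(f_! f^* \CC_\bullet)_y \simeq \CC_y[X_y]$, the \emph{constant} colimit over the fibre $X_y$, and the counit becomes the fold map $\CC_y[X_y] \to \CC_y$ in $\tCatK$. To exhibit a right adjoint to this map one needs the identification $\CC_y[X_y] \simeq \CC_y^{X_y}$, under which the fold map becomes the colimit functor $\colim_{X_y}$, left adjoint to the constant-diagram functor. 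That identification is precisely \cref{constant} (the constant case), which the paper proves independently via the comparison $\SpacesK[-] \simeq \PShK \simeq \SpacesK^{(-),\LL}$ and the resulting self-duality of $\SpacesK[X]$. Your proposal never invokes this input, and without it the second-level adjointability cannot be checked. Indeed, your closing ``sanity check''---that the extension recovers $\colim_X \simeq \lim_X$---exposes the circularity: a form of that very equivalence is what is needed to verify condition~(3) in the first place. The paper's architecture is to establish the constant case first and then use it (via \cref{cf-adj}) to feed the $2$-fold condition; your outline omits this step.
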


This indeed subsumes the two previous results.
\cref{lim-colim-intro} follows by noting that the spans $\pt \gets X = X$ and $X = X \to \pt$ are adjoint to one another from both sides in $\Spanth(\cK_0)$.
Moreover, our previous work on iterated spans shows that the equivalence is given by a certain norm map \cite[Proposition 2.24]{shifted}.
Similarly, \cref{constant-intro} follows by applying the $3$-functor to the endomorphisms of $\pt \in \Spanth(\cK_0)$.

\begin{remark}\label{lim-dual}
	Our results admit a dual version for the $2$-category $\tCatKlim$ of categories admitting $\cK$-indexed \emph{limits} and functors preserving them, through the $2$-equivalence
	\[
		(-)^\op\colon \tCatK \iso (\tCatKlim)^{\co}.
	\]
	In particular, \cref{main-thm-intro} gives a $3$-functor
	\[
		(\tCatKlim)^{(-)}\colon \Spanth(\cK_0)^{\trop} \too \thCatt.
	\] 
\end{remark}

\begin{remark}\label{connection-to-span2}
	As was mentioned above, in our previous work we have shown that the $2$-category of iterated spans $\Spant(\cK_0)$, that is, the maximal $2$-category in $\Spanth(\cK_0)$, carries a $\cK_0$-semiadditve $\cK_0$-parameterized object \cite[Proposition 2.24]{shifted}.
	Moreover, we conjecture that it is the universal example when $\cK_0$ is truncated \cite[Conjecture 2.25]{shifted}.
	In light of this, one may wonder why \cref{main-thm-intro} provides a more highly structured result.
	This stems from the categorical nature of $\tCatK$, and has a decategorified analogue.
	Harpaz proved that $\Spano(\Spacesmfin)$ is the free $m$-semiadditive category, so that any object $X \in \DD$ in an $m$-semiadditive category lifts to a functor $X^{(-)}\colon \Spano(\Spacesmfin) \to \DD$, encoding the distributivity of integration over restriction.
	In the categorical case $\DD = \Catmfin$, the integration is left adjoint to the restriction, and their distributivity data is the Beck--Chevalley map.
	Thus, in this case, the universal property of the $2$-category of spans $\Spanoh(\Spacesmfin)$ (recalled in \cref{rec-span}) shows that every object $\CC \in \Catmfin$ further lifts to a $2$-functor $\CC^{(-)}\colon \Spanoh(\Spacesmfin) \to \tCatmfin$.
\end{remark}


\subsection{Sketch of the Proof}

We briefly outline the proof of the results above for the reader's convenience.

\begin{enumerate}
	\item \cref{constant-intro} for $\CC = \SpacesK$:
	We prove the case of constant diagrams on $\SpacesK$ by comparing both sides to the functor $\PShK\colon \Cat \to \CatK$, the left adjoint to the inclusion.
	The comparison to constant colimits is essentially immediate by preservation of colimits (see \cref{colim-PK}).
	The comparison to constant limits is established by analyzing the full subcategory $\PShK(X) \subset \PSh(X) \simeq \Spaces^X$ using the Yoneda density theorem (see \cref{lim-PK}).

	\item \cref{constant-intro} for arbitrary $\CC \in \CatK$:
	From the first step, we deduce that $\SpacesK[X]$ is self-dual, and that the duality sends $\SpacesK[X] \to \SpacesK[Y]$ to its right adjoint (see \cref{dual-R}).
	This implies the case of an arbitrary constant diagram by tensoring with $\CC$ (see \cref{constant}).
	This step closely follows the proof for the case of $\PrL$ from \cite[\S4.1]{CatChar}.

	\item \cref{main-thm-intro}:
	We prove that the functor in question satisfies the property for which $\Spanth(\cK_0)$ is universal, by reducing it to the case of a constant diagram from the last step, together with certain coherence conditions verified using the universal property of $\Spanoh(\cK_0)$ (see \cref{main-thm}).
	
	\item \cref{lim-colim-intro}:
	We deduce the identification of limits and colimits by noting that the two adjoints of the span $\pt \gets X = X$ coincide in the $3$-category $\Spanth(\cK_0)$, which is preserved under the $3$-functor from \cref{main-thm-intro} (see \cref{lim-colim}).
\end{enumerate}

\subsection{Acknowledgements}

I thank the anonymous referee for carefully reading the manuscript and for their helpful comments and suggestions.
I am grateful to Lior Yanovski for numerous conversations about spans and ambidexterity over the years.
I thank Bastiaan Cnossen and Thomas Blom for useful discussions, and Shai Keidar for his comments on the manuscript.
I would like to thank the Max Planck--Weizmann joint postdoctoral program for its financial support, and the Max Planck Institute for Mathematics in Bonn for its hospitality during the workshop on ``Unstable Homotopy Theory'', during which a part of this manuscript was written.

\section{The Proof}

We fix a full subcategory of spaces $\mdef{\cK_0} \subset \Spaces$ including the point and closed under pullbacks (or empty), and a collection of categories containing it $\cK_0 \subset \mdef{\cK} \subset \Cat$.
We denote by $\mdef{\CatK}$ the category of categories admitting $\cK$-indexed colimits and functors preserving them.
Recall from \cite[\S4.8.1 and Lemma 4.8.4.2]{HA} that it is presentably symmetric monoidal, with unit denoted by $\mdef{\SpacesK}$.
Moreover, the inclusion $\CatK \subset \Cat$ admits a symmetric monoidal left adjoint
\[
	\mdef{\PShK}\colon \Cat \too \CatK.
\]
This makes $\CatK$ into a commutative algebra, and in particular a module, over $\Cat$ in $\PrL$.
Recalling from \cite[Theorem 1.2]{Hei} that presentable $2$-categories are equivalent to modules over $\Cat$ in $\PrL$, this makes $\CatK$ into a $2$-category, which we denote by $\mdef{\tCatK}$.

\subsection{Higher Span Categories}\label{rec-span}

In this paper we shall make use of the $2$-category of spans and the $3$-category of iterated spans, featuring in \cref{main-thm-intro}.
We begin by recalling these categories and their universal properties, and we fill some gaps missing from the literature.
For this section, we fix the following notation.

\begin{notn}
	We let $\cX$ denote a category admitting pullbacks (which we will later take to be $\cK_0$).
\end{notn}

The $2$-category $\mdef{\Spanoh(\cX)}$, was constructed by Haugseng \cite{spans} (where it is denoted by $\Spano^+(\cX)$), to which we refer the reader for a detailed discussion.
We informally recall that this is the $2$-category whose objects are the objects of $\cX$, a $1$-morphism from $X$ to $Y$ is a span $X \gets Z \to Y$ (and composition is given by pullback), and a $2$-morphism is a map between spans.
This $2$-category enjoys a universal property, originally stated by Gaitsgory--Rozenblyum \cite[Chapter 7, Theorem 3.2.2]{GR}, and recently proven independently by Macpherson \cite{Macph} (where it is denoted by $\mrm{Corr}_{\cX}$) and Stefanich \cite{Stef} (where it is denoted by $2\mrm{Corr}(\cX)$). We also refer the reader to \cite[Theorem 2.2.7]{EH} for another presentation of Macpherson's proof.

\begin{defn}
	Let $\tA$ be a $2$-category.
	We say that a commutative square
	\[\begin{tikzcd}
		D & A \\
		B & C
		\arrow["{f'}", from=1-1, to=1-2]
		\arrow["{L'}"', from=1-1, to=2-1]
		\arrow["L", from=1-2, to=2-2]
		\arrow["f"', from=2-1, to=2-2]
	\end{tikzcd}\]
	is (vertically) \tdef{right adjointable} if $L$ and $L'$ admit right adjoints $R$ and $R'$, and the Beck--Chevalley map is an isomorphism
	\[
		f' R' \iso R f.
	\]
\end{defn}

\begin{defn}
	We say that a functor $F\colon \cX \to \tA$ to a $2$-category $\tA$ satisfies the \tdef{left Beck--Chevalley condition} if it takes every pullback square in $\cX$ to a right adjointable square in $\tA$.
\end{defn}

With this at hand, we can state the universal property of $\Spanoh(\cX)$.

\begin{thm}[{\cite[Theorem 3.4.18]{Stef}}]\label{univ-1.5}
	Let $\tA$ be a $2$-category.
	The inclusion $\cX \to \Spanoh(\cX)$ induces an equivalence
	\[
		\Map(\Spanoh(\cX), \tA) \iso \Map_{\BC^\LL}(\cX, \tA)
	\]
	from the space of $2$-functors $\Spanoh(\cX) \to \tA$ to the space of functors $\cX \to \tA$ satisfying the left Beck--Chevalley condition.
\end{thm}

If $\cX$ is symmetric monoidal, and the tensor product preserves pullbacks, then the $2$-category $\Spanoh(\cX)$ inherits a compatible symmetric monoidal structure.
Macpherson proved a symmetric monoidal version of the universal property.

\begin{thm}[{\cite[4.4.6 Theorem]{Macph}}]\label{univ-1.5-sm}
	Assume that $\cX$ is symmetric monoidal and that the tensor product preserves pullbacks.
	Let $\tA$ be a symmetric monoidal $2$-category.
	The inclusion $\cX \to \Spanoh(\cX)$ induces an equivalence
	\[
		\Map^\otimes(\Spanoh(\cX), \tA) \iso \Map^\otimes_{\BC^\LL}(\cX, \tA)
	\]
	from the space of symmetric monoidal $2$-functors $\Spanoh(\cX) \to \tA$ to the space of symmetric monoidal functors $\cX \to \tA$ satisfying the left Beck--Chevalley condition.
\end{thm}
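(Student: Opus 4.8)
The plan is to deduce the symmetric monoidal statement from the non-monoidal universal property \cref{univ-1.5} by modeling symmetric monoidal structures as commutative monoid objects and exploiting the fact that Haugseng's span construction preserves products. Recall that a symmetric monoidal $2$-category is a commutative monoid (equivalently $\mbb{E}_\infty$-) object in $\thCatt$, i.e.\ a functor $\mrm{Fin}_* \to \thCatt$ satisfying the Segal condition, with fiber $\tA^\otimes_{\langle n\rangle} \simeq \tA^{\times n}$ over $\langle n\rangle$, and that a symmetric monoidal $2$-functor is a morphism of such $\Gamma$-objects. Let $\Cat^{\mrm{pb}}$ denote the category of categories admitting pullbacks and pullback-preserving functors. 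The key structural input, which I would extract from Haugseng's construction \cite{spans}, is that $\Spanoh\colon \Cat^{\mrm{pb}} \to \thCatt$ is a functor preserving finite products: indeed $\Spanoh(\cX \times \cY) \simeq \Spanoh(\cX) \times \Spanoh(\cY)$, since spans and their composition pullbacks in a product are computed componentwise, and $\Spanoh(\pt) \simeq \pt$.

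Using the hypothesis that $\otimes$ preserves pullbacks, the symmetric monoidal structure $\cX^\otimes\colon \mrm{Fin}_* \to \Cat$ refines to a commutative monoid object in $\Cat^{\mrm{pb}}$: its inert maps give projections and its active maps give tensor products, all of which are pullback-preserving. Since a product-preserving functor carries commutative monoids to commutative monoids, $\Spanoh \circ \cX^\otimes$ is a commutative monoid in $\thCatt$, and I would identify it with the given symmetric monoidal structure $\Spanoh(\cX)^\otimes$, so that in particular $\Spanoh(\cX)^\otimes_{\langle n\rangle} \simeq \Spanoh(\cX^{\times n})$.

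It then remains to compute. First I would record that the equivalence of \cref{univ-1.5} is natural in both variables: contravariantly in $\cX \in \Cat^{\mrm{pb}}$ by precomposition (here $\Map_{\BC}(-, \tA)$ is functorial because the composite of a pullback-preserving functor with a left Beck--Chevalley functor is again left Beck--Chevalley), and covariantly in $\tA$. Writing the space of symmetric monoidal functors as an end over $\mrm{Fin}_*$,
\[
	\Map^\otimes(\Spanoh(\cX), \tA) \simeq \Nat(\Spanoh(\cX)^\otimes, \tA^\otimes) \simeq \int_{\langle n\rangle \in \mrm{Fin}_*} \Map\bigl(\Spanoh(\cX^{\times n}), \tA^\otimes_{\langle n\rangle}\bigr),
\]
I would apply \cref{univ-1.5} inside the integrand; its bi-naturality yields an equivalence of the two functors of $\langle n\rangle$, hence on ends,
\[
	\int_{\langle n\rangle} \Map\bigl(\Spanoh(\cX^{\times n}), \tA^\otimes_{\langle n\rangle}\bigr) \iso \int_{\langle n\rangle} \Map_{\BC}\bigl(\cX^{\times n}, \tA^\otimes_{\langle n\rangle}\bigr) \simeq \Map^\otimes_{\BC}(\cX, \tA).
\]
The last identification uses that pullbacks in $\cX^{\times n}$ are computed componentwise, so the Beck--Chevalley condition at each level is equivalent to the one at level $\langle 1\rangle$; thus a levelwise Beck--Chevalley morphism of $\Gamma$-objects is precisely a symmetric monoidal functor whose underlying functor satisfies the left Beck--Chevalley condition.

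The main obstacle is the coherence bookkeeping rather than any single idea: one must verify that Haugseng's construction is functorial and \emph{strongly} product-preserving at the level needed to produce the $\Gamma$-object $\Spanoh(\cX)^\otimes$ and to match it with the ambient symmetric monoidal structure on $\Spanoh(\cX)$, and that the equivalence of \cref{univ-1.5} is simultaneously natural in $\cX$ and in $\tA$ — enough to descend to an equivalence of functors on (the twisted arrow category of) $\mrm{Fin}_*$, and thus on ends. Once these naturalities are in place, the remaining steps are formal.
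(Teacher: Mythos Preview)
The paper does not prove this theorem; it is quoted as a black box from Macpherson \cite[4.4.6 Theorem]{Macph}, so there is no in-paper argument to compare your proposal against.

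That said, your strategy is sound and is in fact the standard way such results are obtained: model symmetric monoidal structures as commutative monoids in $\mrm{Fin}_*$, use that $\Spanoh$ is product-preserving on $\Cat^{\mrm{pb}}$ to transport the commutative monoid $\cX^\otimes$ to $\Spanoh(\cX)^\otimes$, and then apply the non-monoidal universal property \cref{univ-1.5} levelwise inside the end computing $\Nat(\Spanoh(\cX)^\otimes,\tA^\otimes)$. Your reduction of the Beck--Chevalley condition at level $\langle n\rangle$ to level $\langle 1\rangle$ via componentwise pullbacks and the Segal maps is correct. The points you flag as obstacles---that $\Spanoh$ is functorial and strongly product-preserving in a way compatible with Haugseng's symmetric monoidal structure, and that the equivalence of \cref{univ-1.5} is simultaneously natural in $\cX\in\Cat^{\mrm{pb}}$ and in $\tA$---are precisely where the genuine work lies, and they are the content that Macpherson's paper supplies. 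Granting those, the remainder is indeed formal.
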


The $3$-category of iterated spans $\mdef{\Spanth(\cX)}$ mentioned in the introduction was also constructed by Haugseng \cite{spans} (where it is denoted by $\Spant^+(\cX)$).
Stefanich proved an analogues universal property for this $3$-category (which he denotes by $3\mrm{Corr}(\cX)$), which we now recall.

\begin{defn}
	Let $\trA$ be a $3$-category.
	We say that a commutative square
	\[\begin{tikzcd}
		D & A \\
		B & C
		\arrow["{f'}", from=1-1, to=1-2]
		\arrow["{L'}"', from=1-1, to=2-1]
		\arrow["L", from=1-2, to=2-2]
		\arrow["f"', from=2-1, to=2-2]
	\end{tikzcd}\]
	is (vertically) \tdef{$2$-fold right adjointable} if the following conditions hold:
	\begin{enumerate}
		\item $L, L', f, f'$ admit right adjoints $R, R', g, g'$, with counit maps $c_L, c_{L'}, c_f, c_{f'}$.
		\item The square is right adjointable.
		\item The commutative square
		\[\begin{tikzcd}
			{Lf'g'R \simeq fL'R'g} & fg \\
			LR & {\Id_C}
			\arrow["{c_{L'}}", from=1-1, to=1-2]
			\arrow["{c_{f'}}"', from=1-1, to=2-1]
			\arrow["{c_f}", from=1-2, to=2-2]
			\arrow["{c_L}"', from=2-1, to=2-2]
		\end{tikzcd}\]
		in the $2$-category $\tEnd(C)$ is right adjointable.
	\end{enumerate}
\end{defn}

\begin{defn}
	We say that a functor $F\colon \cX \to \trA$ to a $3$-category $\trA$ satisfies the \tdef{$2$-fold left Beck--Chevalley condition} if it takes every pullback square in $\cX$ to a $2$-fold right adjointable square in $\trA$.
\end{defn}

\begin{thm}[{\cite[Theorem 4.2.6]{Stef}}]\label{univ-2.5}
	Let $\trA$ be a $3$-category.
	The inclusion $\cX \to \Spanth(\cX)$ induces an equivalence
	\[
		\Map(\Spanth(\cX), \trA) \iso \Map_{\BC_2^\LL}(\cX, \trA)
	\]
	from the space of $3$-functors $\Spanth(\cX) \to \trA$ to the space of functors $\cX \to \trA$ satisfying the $2$-fold left Beck--Chevalley condition.
\end{thm}

Important for our applications will be the case where the target $2$-category is $\tA = \tCat$, the $2$-category of categories, or, more generally, $\tA = \tCatK$.
Recall that the $2$-category of spans encodes two functorialities adjoint to one another (and satisfying the Beck--Chevalley condition).
At the same time, one can pass between left and right adjoints via the mate equivalence
\[
	\tCat^\LL \simeq \tCat^{\RR,\co,\op}
\]
from \cite[3.1.11]{laxoplax} (see also \cite[Corollary 4.7.4.18(3)]{HA}).
Recall that a morphism in $\tCatK$ is a left adjoint, if and only if the underlying functor is a left adjoint (in $\tCat$) and its right adjoint preserves $\cK$-indexed colimits (i.e.\ belongs to $\tCatK$), and the mate equivalence restricts to the corresponding subcategories
\[
	\tCatK^\LL \simeq \tCatK^{\RR,\co,\op}.
\]
As one might expect, the two constructions are compatible with one another, as we now prove.

\begin{notn}\label{mate-equiv}
	Let $\CC_\bullet\colon I \to \tCatK$ be a functor from a category $I$.
	Assume that it sends each morphism to a left adjoint morphism, i.e.\ lands in $\tCatK^\LL$.
	We denote by $\mdef{\CC_\bullet^\RR}\colon I^\op \to \tCatK$ the diagram obtained by passing to right adjoints via the mate equivalence above.
	Dually, we denote by $\mdef{\CC_\bullet^\LL}$ the diagram obtained by passing to left adjoints.
\end{notn}

\begin{prop}\label{mate-span}
	Let $\CC_\bullet\colon \cX \to \tCatK$ be a functor satisfying the left Beck--Chevalley condition.
	Then $\CC_\bullet^\RR$ is equivalent to the pre-composition of the lift $\overline{\CC}_\bullet\colon \Spanoh(\cX) \to \tCatK$ from \cref{univ-1.5} with the inclusion $\cX^\op \to \Spanoh(\cX)$.
\end{prop}

\begin{proof}
	We first handle the case where $\cK = \emptyset$, i.e.\ for a functor landing in $\tCat$.
	Recall that the passage between $\CC_\bullet$ and $\CC_\bullet^\RR$ using the mate equivalence from \cite[3.1.11]{laxoplax} (or \cite[Corollary 4.7.4.18(3)]{HA}) is given by first applying cocartesian unstraightening, observing that the resulting cocartesian fibration is also cartesian, and applying cartesian straightening.
	Now, \cite[Theorem 3.12]{UnivUnf} gives a description of the inverses of the equivalences
	\begin{align*}
		\Map(\Spanoh(\cX), \tCat) &\iso \Map_{\BC^\LL}(\cX, \tCat), \\
		\Map(\Spanoh(\cX), \tCat) &\iso \Map_{\BC^\RR}(\cX^\op, \tCat)
	\end{align*}
	from \cref{univ-1.5} and its dual.
	These inverses are given by applying (co)cartesian unstraightening, applying the functor $\Spanoh(-)$ to the resulting fibration, and finally applying cartesian straightening.
	This shows that the two constructions in question are indeed equivalent.

	The result follows formally for arbitrary $\cK$ as we now explain.
	Let $\iota\colon \tCatK \hookrightarrow \tCat$ denote the inclusion, and consider the composition $\iota\CC_\bullet\colon \cX \to \tCat$.
	By the previous case, we know that $(\iota\CC_\bullet)^\RR$ is equivalent to the pre-composition of the lift $\overline{\iota\CC}_\bullet$ with $\cX^\op \to \Spanoh(\cX)$.
	Now, the uniqueness of lifts shows that there is an equivalence $\overline{\iota\CC}_\bullet \simeq \iota\overline{\CC}_\bullet$ of functors $\Spanoh(\cX) \to \tCat$.
	Since the mate equivalence for $\tCatK$ is defined by restricting the one for $\tCat$, we have an equivalence $(\iota\CC_\bullet)^\RR \simeq \iota\CC_\bullet^\RR$.
	Combining these, we get that $\iota\CC_\bullet^\RR$ is equivalent to the pre-composition of the lift $\iota\overline{\CC}_\bullet$ with $\cX^\op \to \Spanoh(\cX)$, which concludes the proof since $\iota$ is an inclusion.
\end{proof}

While we expect the mate equivalence to be natural in $2$-functors, we are unaware of such a result in the literature.
Nevertheless, we can use the last proposition to prove that, if we further assume the Beck--Chevalley condition, as follows.

\begin{prop}\label{mate-functorial}
	Let $\cK, \cK' \subset \Cat$ be two collections of categories and let $F\colon \tCatK \to \tCatKp$ be any $2$-functor.
	Let $\CC_\bullet\colon \cX \to \tCatK$ be a functor satisfying the left Beck--Chevalley condition.
	There is a canonical equivalence of functors $\cX^\op \to \tCatKp$
	\[
		(F\CC_\bullet)^\RR \simeq F\CC_\bullet^\RR.
	\]
\end{prop}

\begin{proof}
	The uniqueness of lifts shows that there is an equivalence $\overline{F\CC}_\bullet \simeq F\overline{\CC}_\bullet$ of functors $\Spanoh(\cX) \to \tCatKp$.
	Therefore, their restrictions along $\cX^\op \to \Spanoh(\cX)$, which by \cref{mate-span} are $(F\CC_\bullet)^\RR$ and $F\CC_\bullet^\RR$, are also equivalent.
\end{proof}

\cref{mate-span} shows that the $2$-category of spans implements the passage between left and right adjoints functorially.
For our applications, it will be important to understand the following specific case in more detail.
Let $\CC$ be a cocomplete category, and consider the functor
\[
	\CC^{(-)}\colon \Spaces^\op \too \tCat
\]
sending $X \in \Spaces$ to $\CC^X$ and $f\colon X \to Y$ to the pre-composition functor $f^*\colon \CC^Y \to \CC^X$.
Since $\CC$ is cocomplete, the functors $f^*$ admit left adjoints $f_!$ given by left Kan extension, and it follows from \cite[Proposition 4.3.3]{HL} that $\CC^{(-)}$ satisfies the left Beck--Chevalley condition.
Therefore, by \cref{univ-1.5}, it extends to a $2$-functor
\[
	\CC^{(-)}\colon \Spanoh(\Spaces) \too \tCat.
\]
We know that this sends the span $\pt \gets X \to \pt$ to the functor $f_{X!}f_X^*\colon \CC \to \CC$, namely, the constant colimit functor $\colim_X(-)$.
Our next goal is to prove this description functorially in $X$.
Recall that the category of endomorphisms of the point is $\End_{\Spanoh(\Spaces)}(\pt) \simeq \Spaces$.
Thus, the lift $\CC^{(-)}$ induces a functor
\[
	F\colon \Spaces \too \End(\CC),
\]
sending $X$ to the image of the span $\pt \gets X \to \pt$ under $\CC^{(-)}$, i.e.\ to $\colim_X(-)$.
We claim that $F$ agrees with the colimit functor coherently.
Namely, that $F$ is equivalent to $\Id_\CC[-]\colon \Spaces \to \End(\CC)$, given by the left Kan extension of $\Id_\CC\colon \pt \to \End(\CC)$ along the inclusion $\pt \to \Spaces$.
We prove this in two steps, starting with the following weaker lemma.

\begin{lem}\label{F-Id-pw}
	The functors $F$ and $\Id_\CC[-]$ above agree point-wise on objects, morphisms and $2$-morphisms.
	That is, for $X \in \Spaces$ we have
	\[
		F(X) \simeq f_{X!}f_X^* \simeq \colim_X(-).
	\]
	Under this identification, for $f\colon X \to Y$, the map $F(f)$ identifies with
	\[
		f_{X!}f_X^* \simeq f_{Y!}f_!f^*f_Y^* \too[c_f] f_{Y!}f^*.
	\]
	Finally, under this identification, for a homotopy $\alpha\colon f \simeq g$, the $2$-morphism $F(f) \simeq F(g)$ identifies with the $2$-morphism $c_f \simeq c_g$.
\end{lem}

\begin{proof}
	Recall from \cite[4.4.17]{CohAdj} that for a fixed functor $R\colon \EE \to \DD$, the space of counits, that is, pairs of $L\colon \DD \to \EE$ and $c\colon LR \to \Id_\EE$ exhibiting $L$ as left adjoint to $R$, is either contractible (if $R$ is a right adjoint) or empty (if it is not).
	We apply this to our situation.
	Let $f\colon X \to Y$ be a map of spaces, and recall that $\CC^{(-)}\colon \Spaces^\op \to \tCat$ sends $f$ to $f^*\colon \CC^Y \to \CC^X$.
	Also recall that the $2$-morphism in $\Spanoh(\Spaces)$
	\[\begin{tikzcd}[row sep=scriptsize]
		& X \\
		Y && Y \\
		& Y
		\arrow["f"', from=1-2, to=2-1]
		\arrow["f", from=1-2, to=2-3]
		\arrow["f"', from=1-2, to=3-2]
		\arrow[equals, from=2-1, to=3-2]
		\arrow[equals, from=2-3, to=3-2]
	\end{tikzcd}\]
	is a counit exhibiting the spans $Y \gets X = X$ and $X = X \to Y$ as adjoints (see for example \cite[Proposition 3.3.1]{Stef}).
	Using the uniqueness result above, and that the extension
	\[
		\CC^{(-)}\colon \Spanoh(\Spaces) \too \tCat
	\]
	is a $2$-functor, we see that the spans $Y \gets X = X$ and $X = X \to Y$ are sent to the functors $f^*$ and the left Kan extension $f_!$, and the $2$-morphism above is sent to their counit
	\[
		c_f\colon f_!f^* \too \Id_{\CC^Y}.
	\]
	With this in mind we return to the statement of the lemma.

	For an object $X \in \Spaces$, the functor $F(X)$ is given by the image of the span $\pt \gets X \to \pt$, corresponding to the case $f_X\colon X \to \pt$ above, so we get a canonical equivalence
	\[
		F(X) \simeq f_{X!}f_X^* \simeq \colim_X(-).
	\]

	For a morphism $f\colon X \to Y$, the morphism $F(f)$ is the image of the $2$-morphism in $\Spanoh(\Spaces)$ depicted on the left, which is equivalent to the composition of $2$-morphisms on the right
	\[\begin{tikzcd}[column sep=small,row sep=small]
		& X &&&& Y && X && Y \\
		\pt && \pt & \simeq & \pt && Y && Y && \pt \\
		& Y &&&& Y && Y && Y
		\arrow[from=1-2, to=2-1]
		\arrow[from=1-2, to=2-3]
		\arrow["f"', from=1-2, to=3-2]
		\arrow[from=1-6, to=2-5]
		\arrow[equals, from=1-6, to=2-7]
		\arrow[equals, from=1-6, to=3-6]
		\arrow["f"', from=1-8, to=2-7]
		\arrow["f", from=1-8, to=2-9]
		\arrow["f"', from=1-8, to=3-8]
		\arrow[from=1-10, to=2-11]
		\arrow[equals, from=1-10, to=3-10]
		\arrow[equals, from=2-7, to=3-8]
		\arrow[equals, from=2-9, to=1-10]
		\arrow[equals, from=2-9, to=3-10]
		\arrow[from=3-2, to=2-1]
		\arrow[from=3-2, to=2-3]
		\arrow[from=3-6, to=2-5]
		\arrow[equals, from=3-6, to=2-7]
		\arrow[equals, from=3-8, to=2-9]
		\arrow[from=3-10, to=2-11]
	\end{tikzcd}\]
	Thus, using the description of the counit from the discussion above, we conclude that $F(f)$ is equivalent to
	\[
		f_{X!}f_X^* \simeq f_{Y!}f_!f^*f_Y^* \too[c_f] f_{Y!}f_Y^*.
	\]

	Finally, for $\alpha\colon f \simeq g$, the uniqueness of counits implies that the $2$-morphism $F(f) \simeq F(g)$ is identified with the isomorphism $c_f \simeq c_g$.
\end{proof}

\begin{prop}\label{F-Id}
	Let $\CC$ be a cocomplete category, and consider the lift
	\[
		\CC^{(-)}\colon \Spanoh(\Spaces) \too \tCat.
	\]
	The functor induced on the endomorphisms of the point
	\[
		F\colon \Spaces \too \End(\CC)
	\]
	is naturally isomorphic to $\Id_\CC[-]$.
\end{prop}

\begin{proof}
	By \cref{F-Id-pw}, $F$ agrees with $\Id_\CC[-]$ on objects, and in particular on the point $F(\pt) \simeq \Id_\CC$.
	Since $\Id_\CC[-]$ is left Kan extended from the point $\pt \in \Spaces$, there is a canonical map $\Id_\CC[-] \to F$.
	It thus suffices to show that $F$ commutes with colimits, which we prove by showing that it preserves (arbitrary) coproducts and pushouts.

	Observe that for coproducts and for pushouts, the cocone diagram which we need to show is universal involves only objects, $1$-morphisms, and (in the case of a pushout) a single $2$-morphism.
	Hence by the non-coherent comparison from \cref{F-Id-pw}, $F$ and $\Id_\CC[-]$ agree on these cocone diagrams, and since $\Id_\CC[-]$ commutes with colimits, this cocone is indeed universal, as required.
\end{proof}

\subsection{The Constant Case}

Recall that the category $\CatK$ is presentable, and in particular both cocomplete and complete, which allows us to define the following functors.

\begin{defn}\label{colim-lim-def}
	Let $\CC \in \tCatK$.
	We define the functors
	\[
		\mdef{\CC[-]}\colon \Spaces \too \tCatK,
		\qquad \mdef{\CC^{(-)}}\colon \Spaces^\op \too \tCatK,
	\]
	sending a space $X$ to the colimit or limit of the constant $X$-indexed diagram on $\CC$, respectively.
\end{defn}

Note that $\CC^{(-)}$ sends $f\colon X \to Y$ to the pre-composition functor $f^*\colon \CC^Y \to \CC^X$.
When the spaces are assumed to be in $\cK_0$, we can say more.

\begin{prop}\label{adjointable}
	For any $\CC \in \tCatK$, the functor
	\[
		\CC^{(-)}\colon \cK_0 \too \tCatK
	\]
	sends morphisms to right adjoints, and sends every pullback square to a left adjointable square.
\end{prop}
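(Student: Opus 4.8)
The plan is to reduce both assertions to concrete statements about functor categories and left Kan extensions, and then to the underlying $\infty$-categorical level. For $X \in \cK_0$ the limit of the constant diagram is the cotensor $\CC^X \simeq \Fun(X, \CC)$, and for $f\colon X \to Y$ the functor $\CC^{(-)}(f) = f^*$ is honest pre-composition. Since $\tCatK$ is a locally full $2$-subcategory of $\tCat$ --- its $1$-morphisms are the $\cK$-colimit-preserving functors and its $2$-morphisms are \emph{all} natural transformations between them --- any adjunction or Beck--Chevalley equivalence produced in $\tCat$ between functors that happen to preserve $\cK$-indexed colimits automatically takes place in $\tCatK$. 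Thus I work in $\tCat$ and only check at the end that the functors involved lie in $\CatK$.

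First I would treat the claim that $f^*$ is a right adjoint. The candidate left adjoint is the left Kan extension $f_! = \mathrm{Lan}_f$. By the pointwise formula its value at $y \in Y$ is the colimit of the restriction to the fiber $X_y := X \times_Y \{y\}$. As $\cK_0$ contains the point and is closed under pullbacks, each such fiber is the pullback of $X \to Y \gets \pt$ and hence lies in $\cK_0 \subset \cK$; since $\CC$ admits $\cK$-indexed colimits, all these colimits exist, so $f_!$ is defined and left adjoint to $f^*$ by the existence criterion for Kan extensions in \cite{HTT}. It remains to see that the adjunction lives in $\tCatK$: the functor $f^*$ preserves all colimits computed pointwise, in particular $\cK$-indexed ones, and $f_!$, being a left adjoint, preserves every colimit; hence both are morphisms of $\CatK$, and by local fullness $f_! \dashv f^*$ is an adjunction in $\tCatK$. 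This gives the first assertion.

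For the second assertion I would start from a pullback square in $\cK_0$ with corner $W = X \times_Z Y$, projections $p\colon W \to X$ and $q\colon W \to Y$, and legs $f\colon X \to Z$, $g\colon Y \to Z$. Applying $\CC^{(-)}$ produces the commuting square of pre-composition functors $p^* f^* \simeq q^* g^*$, each of which admits a left adjoint by the first part. The square is left adjointable precisely when the Beck--Chevalley mate $q_! \, p^* \too g^* f_!$ is an equivalence of functors $\CC^X \to \CC^Y$. To verify this I would evaluate both sides at $y \in Y$ using the pointwise formula: the left-hand side is the colimit over the fiber $W_y = W \times_Y \{y\}$ of the restriction of $p^* F = F \circ p$, while the right-hand side is the colimit over $X_{g(y)} = X \times_Z \{g(y)\}$ of the restriction of $F$. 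The defining pullback identifies $W_y \simeq X_{g(y)}$ compatibly with the projections to $X$, so the two colimit diagrams agree and the mate is an equivalence. Since all four functors preserve $\cK$-colimits, this equivalence holds in $\tCatK$, establishing left adjointability.

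The routine parts are the existence of $f_!$ and the bookkeeping that the functors preserve $\cK$-colimits; these follow formally once the fibers are seen to lie in $\cK_0$. I expect the main obstacle to be the coherent identification underlying the Beck--Chevalley step: the fiberwise equivalence $W_y \simeq X_{g(y)}$ is immediate, but to conclude that the \emph{canonical} mate (and not merely some abstract equivalence) is invertible one must identify the comparison map explicitly with this fiber identification, or else invoke a base-change result for left Kan extensions along maps of spaces. I would phrase the computation through the pointwise Kan extension formula so that the comparison map is manifestly the fiber base-change equivalence, avoiding a separate coherence argument.
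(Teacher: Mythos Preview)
Your proposal is correct and follows essentially the same approach as the paper: existence of $f_!$ via pointwise left Kan extension along the fibers (which lie in $\cK_0$ by closure under pullbacks and containment of $\pt$), and the Beck--Chevalley condition via the identification of fibers $W_y \simeq X_{g(y)}$ in the pointwise formula. The paper's own proof is a two-sentence sketch that simply asserts the existence of $f_!$ from the assumption that $\CC$ has $\cK_0$-indexed colimits and defers the Beck--Chevalley verification entirely to \cite[Proposition 4.3.3]{HL}; you have unpacked precisely what that reference contains, and additionally made explicit the check that $f_!$ and $f^*$ preserve $\cK$-colimits so that the adjunction and the mate live in $\tCatK$, a point the paper leaves implicit.
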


\begin{proof}
	Since $\CC$ admits $\cK_0$-indexed colimits, for a map $f\colon X \to Y$ in $\cK_0$, the functor $f^*\colon \CC^Y \to \CC^X$ admits a left adjoint $f_!\colon \CC^X \to \CC^Y$ given by left Kan extension.
	The second part follows from the formula for left Kan extension, as in the proof of \cite[Proposition 4.3.3]{HL}.
\end{proof}

Our first goal, achieved in \cref{constant}, is to compare $\CC[-]$ with $\CC^{(-),\LL}$, where $(-)^\LL$ denotes the passage to left adjoints from \cref{mate-equiv}.
We begin by analyzing the case $\CC = \SpacesK$, where we can say more, by comparing both functors to $\PShK$.

\begin{lem}\label{colim-PK}
	There is a natural equivalence
	\[
		\SpacesK[-] \simeq \PShK(-)
	\]
	of functors $\Spaces \to \tCatK$.
\end{lem}

\begin{proof}
	The two functors have the same value on $\pt$ and commute with colimits, hence they are equivalent, since $\Spaces$ is freely generated from $\pt$ under colimits.
\end{proof}

To prove the second comparison, we shall make use of the twisted arrow category, introduced by Joyal in the higher categorical setting, and extensively studied by Barwick \cite{BarQ,Unfurling} and Lurie \cite[\S5.2.1]{HA}.
We recall that it can be defined as a complete Segal space in the following way.

\begin{defn}[{\cite[Definition A.2.2]{Haugseng_Melani_Safronov_2022}}]
	Let $\CC$ be a category, with associated complete Segal space $\CC_\bullet\colon \Delta^\op \to \Spaces$.
	The \tdef{twisted arrow category} $\mdef{\Tw(\CC)}$ is the category associated with the complete Segal space
	\[
		\Delta^\op \too[{[n] \mapsto [n]^\op \star [n]}] \Delta^\op \too[\CC_\bullet] \Spaces.
	\]
	There is a map $\Tw(\CC) \to \CC$, corresponding to the inclusion $[n] \to [n]^\op \star [n]$.
\end{defn}

\begin{lem}\label{tw-space}
	For any space $X$, the map $\Tw(X) \to X$ is an isomorphism.
\end{lem}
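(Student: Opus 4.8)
The plan is to verify the statement directly on the level of complete Segal spaces, using the description of $\Tw$ recalled in the definition above. Writing $X_\bullet\colon \Delta^\op \to \Spaces$ for the complete Segal space of the space $X$, by definition $\Tw(X)$ is associated with the composite $X_\bullet \circ ([n] \mapsto [n]^\op \star [n])$. Since the join satisfies $[n]^\op \star [n] \simeq [2n+1]$, the $n$-th space of $\Tw(X)$ is $X_{2n+1}$, and the comparison map $\Tw(X) \to X$ is, at simplicial level $n$, the map $X_{2n+1} \to X_n$ induced by the inclusion $[n] \hooktoo [n]^\op \star [n]$.

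The heart of the argument is the observation that, because $X$ is a space, its complete Segal space $X_\bullet$ is \emph{essentially constant}: every structure map $X_m \to X_n$ attached to a morphism $[n] \to [m]$ in $\Delta$ is an equivalence. Indeed, $X_n \simeq \Map_\Cat([n], X)$, and since $X$ is an $\infty$-groupoid this mapping space depends only on the realization of $[n]$; as $[n]$ has a terminal object its realization is contractible, so $X_n \simeq \Map_\Spaces(\pt, X) \simeq X$ compatibly in $[n]$. In particular, all face and degeneracy maps of $X_\bullet$ are equivalences.

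Hence each comparison map $X_{2n+1} \to X_n$, being one of these structure maps, is an equivalence, so $\Tw(X) \to X$ is a levelwise equivalence of complete Segal spaces, and therefore an equivalence of categories. I expect the only real care needed is in pinning down the essential constancy of $X_\bullet$ and its compatibility with the reindexing functor $[n] \mapsto [n]^\op \star [n]$; once that is in place the conclusion is immediate. An alternative, more structural route would be to observe that $\Tw$ restricts to an endofunctor of $\Spaces$ sending $\pt$ to $\pt$ and commuting with colimits, together with the fact that $\Spaces$ is freely generated under colimits by $\pt$; but establishing that $\Tw(X)$ is again a space is essentially the content of the essential-constancy computation, so I would carry out the direct argument above.
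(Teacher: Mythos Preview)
Your proposal is correct and follows essentially the same approach as the paper: both arguments use that the complete Segal space of a space is constant, whence precomposing with $[n]\mapsto[n]^{\op}\star[n]$ again yields a constant functor and the comparison map is a levelwise equivalence. The paper simply states this in one sentence, while you spell out the identification $[n]^{\op}\star[n]\simeq[2n+1]$ and the essential constancy explicitly.
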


\begin{proof}
	Recall that a category $\CC$ is a space if and only if the associated complete Segal space $\CC_\bullet\colon \Delta^\op \to \Spaces$ is a constant functor.
	We see that the complete Segal space associated with $\Tw(X)$ is constant on $X$ as well, concluding the proof.
\end{proof}

We turn to comparing $\PShK$ to the functor $\SpacesK^{(-),\LL}$.

\begin{lem}\label{lim-PK}
	There is a natural equivalence
	\[
		\PShK(-) \simeq \SpacesK^{(-),\LL}
	\]
	of functors $\cK_0 \to \tCatK$. 
\end{lem}

\begin{proof}
	Recall that the free cocompletion functor
	\[
		\PSh\colon \Cat \too \CAT
	\]
	sends a category $\CC$ to the category of presheaves $\Fun(\CC^\op, \Spaces)$, and the functoriality is given by left Kan extension.
	In other words, we have a natural equivalence
	\[
		\PSh(-) \simeq \Fun((-)^\op, \Spaces)^\LL
	\]
	of functors $\Cat \to \CAT$ (see for example \cite[\S8]{HHLN}).
	Restricting this to $\cK_0 \subset \Cat$, we obtain a natural equivalence
	\[
		\PSh(-) \simeq \Spaces^{(-),\LL}.
	\]
	Recall that in \cite[Proposition 5.3.6.2]{HTT}, Lurie constructs $\PShK$ as a subfunctor of $\PSh$.
	Specifically, given $\CC \in \Cat$, the category $\PShK(\CC)$ is the full subcategory of $\PSh(\CC)$ generated from the image of the Yoneda $\yon\colon \CC \to \PSh(\CC)$ under $\cK$-indexed colimits.
	It suffices to show that under the equivalence above, this subfunctor corresponds to $\SpacesK^{(-),\LL}$.
	This can be check on objects, i.e.\ we need to show that $\PShK(X) \subset \PSh(X)$ corresponds to $\SpacesK^X \subset \Spaces^X$ for any $X \in \cK_0$.

	We start with the inclusion $\PShK(X) \subset \SpacesK^X$.
	For a point $x \in X$, the Yoneda $\yon(x)\colon X \to \Spaces$ is the functor sending $y \in X$ to the path space from $x$ to $y$ in $X$.
	Since $\cK_0$ is closed under pullbacks, this pullback is in $\cK_0$, and in particular $\yon(x) \in \SpacesK^X$.
	Since $\SpacesK$ is closed under $\cK$-indexed colimits, we see that $\PShK(X)$ is indeed contained in $\SpacesK^X$.

	It remains to show the reverse inclusion.
	Recall that the Yoneda density theorem (c.f.\ \cite[Remark 4.2]{coend}) says that any presheaf $F \in \PSh(\CC)$ can be expressed as a weighted colimit (i.e., coend) of representables
	\[
		F \simeq \coliminl^F \yon.
	\]
	That is, $F$ is the colimit of the composition
	\[
		\Tw(\CC) \too \CC^\op \times \CC \too[F \times \yon] \Spaces \times \PSh(\CC) \too[\times] \PSh(\CC).
	\]
	We note that since the functor $\PShK$ is symmetric monoidal, $\PShK(X)$ is a module over $\PShK(\pt) \simeq \SpacesK$, i.e.\ the subcategory $\PShK(X) \subset \PSh(X)$ is closed under taking the product with an object from $\SpacesK \subset \Spaces$.
	Going back to our case, let $F \in \PSh(X)$ be a presheaf taking values in $\SpacesK$.
	By \cref{tw-space}, we have $\Tw(X) \simeq X$, so in this case the colimit above is indexed over $X$.
	The value of the diagram at $x \in X$ is $F(x) \times \yon(x)$.
	Since $\yon(x) \in \PShK(X)$ and $F(x) \in \SpacesK$, we get that $F(x) \times \yon(x) \in \PShK(X)$.
	By the closure of $\PShK(X)$ under $\cK$-indexed colimits, we see that $F$ is in $\PShK(X)$.
	
	In conclusion, we have shown that $\PShK(X) \simeq \SpacesK^X$ under the equivalence constructed above, as required.
\end{proof}

Using these lemmas we conclude our analysis of the case $\CC = \SpacesK$.

\begin{prop}\label{dual-R}
	For any $X \in \cK_0$, the category $\SpacesK[X] \in \CatK$ is self-dual.
	Moreover, we have equivalences
	\[
		\SpacesK[-] \simeq \PShK(-) \simeq \SpacesK^{(-),\LL} \simeq \SpacesK[-]^{\vee,\LL}
	\]
	of functors $\cK_0 \to \tCatK$.
\end{prop}

\begin{proof}
	By \cref{colim-PK} the functor $\SpacesK[-]$ is equivalent to $\PShK(-)$, and in particular it inherits a symmetric monoidal structure.
	By \cref{lim-PK}, it is furthermore equivalent to $\SpacesK^{(-),\LL}$, whence by \cref{adjointable} it sends morphisms to left adjoints and pullback squares to right adjointable squares.
	Thus, by \cref{univ-1.5-sm}, $\SpacesK[-]$ extends into a symmetric monoidal $2$-functor
	\[
		\SpacesK[-]\colon \Spanoh(\cK_0) \too \tCatK.
	\]
	By \cite[Theorem 1.4]{spans}, every object $X \in \Spanoh(\cK_0)$ is self-dual, so we conclude that $\SpacesK[X] \in \tCatK$ is indeed self-dual.

	Turning our attention to the equivalences, note that the first is \cref{colim-PK} and the second is \cref{lim-PK}.
	As for the third equivalence, since the symmetric monoidal structure on $\tCatK$ is closed, we have
	\begin{align*}
		\SpacesK[-]^\vee
		&\simeq \Fun_\cK(\SpacesK[-], \SpacesK)\\
		&\simeq \Fun_\cK(\PShK(-), \SpacesK)\\
		&\simeq \Fun(-, \SpacesK)\\
		&\simeq \SpacesK^{(-)},
	\end{align*}
	where we have used that $\PShK$ is left adjoint to the inclusion $\CatK \subset \Cat$.
\end{proof}

Finally, we prove the case of constant colimits.

\begin{prop}\label{constant}
	There is an equivalence
	\[
		\CC[-] \simeq \CC^{(-),\LL}
	\]
	of functors $\cK_0 \to \tCatK$.
\end{prop}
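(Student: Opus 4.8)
The plan is to bootstrap from the already-established case $\CC = \SpacesK$ in \cref{dual-R} to an arbitrary $\CC$ by tensoring, closely following the treatment of $\PrL$ in \cite[\S4.1]{CatChar}. Since $\tCatK$ is presentably symmetric monoidal, the tensor product preserves colimits in each variable, so the functor $\CC \otimes (-)$ commutes with the formation of constant colimits. Writing $\CC \simeq \CC \otimes \SpacesK$ with $\SpacesK$ the unit, we obtain a natural equivalence
\[
	\CC[-] \simeq \CC \otimes \SpacesK[-]
\]
of functors $\cK_0 \to \tCatK$, since $\CC[X] = \colim_X \CC \simeq \CC \otimes \colim_X \SpacesK = \CC \otimes \SpacesK[X]$.

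For the limit side, I would pass through the internal hom $\tHom$ of the closed symmetric monoidal category $\tCatK$. Because $\tHom(-, \CC)$ sends colimits in its first variable to limits and $\tHom(\SpacesK, \CC) \simeq \CC$, there is a natural equivalence
\[
	\CC^{(-)} \simeq \tHom(\SpacesK[-], \CC)
\]
of functors $\cK_0^\op \to \tCatK$, under which $f^*$ corresponds to $\tHom(\SpacesK[f], \CC)$. By \cref{dual-R} each $\SpacesK[X]$ is dualizable with $\SpacesK[X]^\vee \simeq \SpacesK[X]$, and hence $\tHom(\SpacesK[X], \CC) \simeq \SpacesK[X]^\vee \otimes \CC \simeq \SpacesK[X] \otimes \CC$, naturally in $X$. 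Combining with the previous paragraph identifies $\CC^{(-)}$ and $\CC[-]$ at the level of objects; it then remains to match the two functorialities.

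Under these identifications the map $f^*$ corresponds to $\SpacesK[f]^\vee \otimes \CC$, which by the second clause of \cref{dual-R} is $\SpacesK[f]^\RR \otimes \CC$. By \cref{adjointable} the functor $\CC^{(-)}$ lands in right adjoints, so $\CC^{(-),\LL}$ is well defined, and it sends $f$ to the left adjoint of $\SpacesK[f]^\RR \otimes \CC$. Since tensoring with $\CC$ preserves adjunctions, this left adjoint is $\SpacesK[f] \otimes \CC$, which is precisely $\CC[f]$ under the equivalence $\CC[-] \simeq \CC \otimes \SpacesK[-]$. This would yield the desired equivalence $\CC[-] \simeq \CC^{(-),\LL}$ of functors $\cK_0 \to \tCatK$.

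The main obstacle is not any single computation but the coherence bookkeeping: upgrading the pointwise identifications to genuine equivalences of functors, and in particular verifying that the self-duality of \cref{dual-R} is natural enough that $\tHom(\SpacesK[-], \CC) \simeq \SpacesK[-]^\vee \otimes \CC$ holds as functors and is compatible with passing to left adjoints. As this step parallels the argument for $\PrL$ in \cite[\S4.1]{CatChar}, I expect the naturality of the duality equivalence and the functoriality of passing to adjoints to be the delicate points.
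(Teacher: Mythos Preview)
Your proposal is correct and follows essentially the same route as the paper: identify $\CC[-] \simeq \SpacesK[-] \otimes_\cK \CC$, identify $\CC^{(-)} \simeq \Fun_\cK(\SpacesK[-], \CC) \simeq \SpacesK[-]^\vee \otimes_\cK \CC$, and then use \cref{dual-R} together with the fact that tensoring with $\CC$ preserves adjunctions to match the functorialities. The paper resolves exactly the coherence concern you flag by observing that $- \otimes_\cK \CC$ is $\CatK$-linear and hence enhances to a $2$-functor $\tCatK \to \tCatK$; this makes $(\SpacesK[-] \otimes_\cK \CC)^\RR \simeq \SpacesK[-]^\RR \otimes_\cK \CC$ a natural equivalence of functors rather than a pointwise one, so no further bookkeeping is needed.
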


\begin{proof}
	First, note that as the symmetric monoidal structure on $\tCatK$ preserves colimits in each coordinate, we have
	\[
		\CC[-] \simeq \SpacesK[-] \otimes_\cK \CC.
	\]
	Second, observe that the functor
	\[
		- \otimes_\cK \CC\colon \CatK \too \CatK
	\]
	is colimit preserving and $\CatK$-linear.
	Via the symmetric monoidal functor $\PShK\colon \Cat \to \CatK$, it is also $\Cat$-linear, that is, a morphism in $\Mod_{\Cat}(\PrL)$.
	Thus, using \cite[Theorem 1.2]{Hei} we conclude that it enhances into a $2$-functor
	\[
		- \otimes_\cK \CC\colon \tCatK \too \tCatK.
	\]
	We get the following chain of equivalences
	\begin{align*}
		\CC[-]^\RR
		&\simeq (\SpacesK[-] \otimes_\cK \CC)^\RR\\
		&\simeq \SpacesK[-]^\RR \otimes_\cK \CC\\
		&\simeq \SpacesK[-]^\vee \otimes_\cK \CC\\
		&\simeq \Fun_\cK(\SpacesK[-], \CC)\\
		&\simeq \Fun_\cK(\PShK(-), \CC)\\
		&\simeq \Fun(-, \CC)\\
		&\simeq \CC^{(-)},
	\end{align*}
	where the first step follows from equivalence above, the second step follows from \cref{mate-functorial} using the $2$-functoriality of $- \otimes_\cK \CC$, the third is \cref{dual-R}, the fourth is by duality, the fifth is by \cref{colim-PK}, and the sixth is by the fact that $\PShK$ is left adjoint to the inclusion $\CatK \subset \Cat$.
	The claim follows by passing to left adjoints.
\end{proof}

\subsection{Lifting to Iterated Spans}

Our next goal is to lift the functor
\[
	\tCatK^{(-)}\colon \cK_0^\op \too \thCatt,
\]
sending $X$ to $\tCatK^X$ and $f\colon X \to Y$ to $f^*\colon \tCatK^Y \to \tCatK^X$, to a $3$-functor from $\Spanth(\cK_0)$.
Recall from \cref{adjointable} that $\tCatK^{(-)}$ satisfies the Beck--Chevalley condition, so \cref{univ-1.5} already provides a lift
\[
	\tCatK^{(-)}\colon \Spanoh(\cK_0) \too \thCatt,
\]
which we use in the proof of the following lemma.

\begin{lem}\label{cf-adj}
	Let $f\colon X \to Y$ be a map in $\cK_0$.
	The counit map
	\[
		c_f\colon f_! f^* \too \Id_{\tCatK^Y}
	\]
	is a left adjoint in the $2$-category $\tEnd(\tCatK^Y)$.
\end{lem}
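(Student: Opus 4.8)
The plan is to reduce the statement to the adjointability of a single morphism in $\tCatK^Y$, via the projection formula, and then to the constant case already established. Since $f^*\colon \tCatK^Y \to \tCatK^X$ is symmetric monoidal with left adjoint $f_!$ (as in \cref{adjointable}), the projection formula identifies the endofunctor $f_! f^*$ of $\tCatK^Y$ with tensoring by the object $\mathcal A_f := f_!(\mathbf 1) \in \tCatK^Y$, where $\mathbf 1$ denotes the unit of $\tCatK^X$; concretely this holds because the pointwise symmetric monoidal structure on $\tCatK^Y \simeq \Fun(Y, \tCatK)$ preserves $\cK$-indexed colimits in each variable. Under this identification the counit becomes tensoring by the morphism $\beta := c_f|_{\mathbf 1}\colon \mathcal A_f \to \mathbf 1$, the restriction of $c_f$ to the unit. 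I would check that $c_f$ is $\tCatK^Y$-linear (it is the counit of an adjunction whose right adjoint $f^*$ is monoidal), so that the resulting equivalence $c_f \simeq (-) \otimes_\cK \beta$ is one of natural transformations.

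Next, the left-regular action $\tCatK^Y \to \tEnd(\tCatK^Y)$ sending $A$ to $(-) \otimes_\cK A$ is a $2$-functor, and $2$-functors preserve adjunctions of $1$-morphisms, so it suffices to show that $\beta\colon \mathcal A_f \to \mathbf 1$ is a left adjoint $1$-morphism in $\tCatK^Y$. Writing $\tCatK^Y \simeq \Fun(Y, \tCatK)$ with $Y \in \cK_0$ a space, I would detect this pointwise: at $y \in Y$ with fiber $X_y := \{y\} \times_Y X \in \cK_0$, the component $\beta_y$ is the codiagonal $\SpacesK[X_y] \to \SpacesK$, which by \cref{colim-PK} and \cref{lim-PK} is the colimit functor $\colim_{X_y}\colon \PShK(X_y) \to \SpacesK$. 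This is a left adjoint in $\tCatK$, its right adjoint being the constant functor $\SpacesK \to \PShK(X_y) \simeq \SpacesK^{X_y}$, which preserves $\cK$-indexed colimits and hence lies in $\CatK$.

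The main obstacle is the passage from these pointwise left adjoints to an adjunction in the functor $2$-category $\tCatK^Y$: I must assemble the pointwise right adjoints into a natural transformation $\mathbf 1 \to \mathcal A_f$ with coherent unit and counit. This is exactly where the hypothesis that $Y$ is a space is essential, since all transition equivalences of $\mathcal A_f$ and $\mathbf 1$ over $Y$ are invertible, so every mate square is automatically right adjointable and the pointwise adjoints glue coherently; I would formalize this via the calculus of mates for families over a space, as developed in \cite{HHLN}. Granting this, $\beta$ is a left adjoint in $\tCatK^Y$, and therefore $c_f \simeq (-) \otimes_\cK \beta$ is a left adjoint in $\tEnd(\tCatK^Y)$, as required.
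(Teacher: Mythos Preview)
Your argument is correct and takes a genuinely different route from the paper's. The paper uses the exponential equivalence $\tEnd(\tCatK^Y)\simeq\tFun(Y\times\tCatK^Y,\tCatK)$ together with the pointwise criterion for adjunctions in functor $2$-categories: it must verify that $c_f$ is a left adjoint at every pair $(y,\CC_\bullet)$, and that the associated squares in both the $Y$-direction and the $\tCatK^Y$-direction are right adjointable. You instead invoke the projection formula to rewrite $f_!f^*$ as $(-)\otimes_\cK\mathcal A_f$ and identify $c_f$ with $(-)\otimes_\cK\beta$, so that the $2$-functoriality of the left-regular action absorbs the entire ``$\tCatK^Y$-variable'' at once; what remains is only to check that $\beta$ is a left adjoint in $\tCatK^Y$, i.e.\ a pointwise check over the space $Y$. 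Both arguments converge on the same core computation---the component at $y$ is the colimit functor over the fiber $X_y$, whose right adjoint is the constant functor---and both use that $Y$ is a space to make the mate squares automatic. Your approach trades the paper's second Beck--Chevalley verification (for morphisms $\CC_\bullet\to\DD_\bullet$) for the projection formula and the $\tCatK^Y$-linearity of $c_f$; the latter are routine here since the tensor is pointwise and colimit-preserving, but they are genuine extra inputs. The payoff is a cleaner reduction: once $\beta$ is known to be a left adjoint, the conclusion is immediate, whereas the paper must argue separately that $F_y$ commutes with constant diagrams.
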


\begin{proof}
	The exponential adjunction gives an equivalence of $2$-categories
	\[
		\tEnd(\tCatK^Y) \simeq \tFun(Y \times \tCatK^Y, \tCatK).
	\]
	Therefore, by \cite[Corollary 4.2.12]{fun-adj}, $c_f$ is a left adjoint if and only if
	\begin{enumerate}
		\item for every $y \in Y$ and $\CC_\bullet\colon Y \to \tCatK$, the map
		\[
			c_f\colon (f_! f^* \CC_\bullet)_y \too \CC_y
		\]
		is a left adjoint, and
		\item for every $y \to z$ in $Y$ and $F_\bullet\colon \CC_\bullet \to \DD_\bullet$ in $\tCatK^Y$, the following commutative square is right adjointable
		\[\begin{tikzcd}
			{(f_! f^* \CC_\bullet)_y} & {(f_! f^* \DD_\bullet)_z} \\
			{\CC_y} & {\DD_z}
			\arrow[from=1-1, to=1-2]
			\arrow["{c_f}"', from=1-1, to=2-1]
			\arrow["{c_f}", from=1-2, to=2-2]
			\arrow[from=2-1, to=2-2]
		\end{tikzcd}\]
	\end{enumerate}
	
	For the first part, we begin by identifying the map in question.
	Let $X_y := X \times_Y \pt$ be the fiber of $X$ over $y$, and denote by $f_y\colon X_y \to \pt$ the unique map.
	Recall that in the $2$-category $\Spanoh(\cK_0)$, the counit map $c_f$ is the $2$-morphism
	\[\begin{tikzcd}[row sep=scriptsize]
		& X \\
		Y && Y \\
		& Y
		\arrow["f"', from=1-2, to=2-1]
		\arrow["f", from=1-2, to=2-3]
		\arrow["f"', from=1-2, to=3-2]
		\arrow[equals, from=2-1, to=3-2]
		\arrow[equals, from=2-3, to=3-2]
	\end{tikzcd}\]
	The evaluation of $c_f$ at $y$ amounts to pulling-back along $y\colon \pt \to Y$, that is, considering the composition of maps of spans appearing on the left in the following diagram, which is in turn equivalent to the composition appearing on the right.
	\[\begin{tikzcd}[column sep=small,row sep=small]
		& X && \pt &&&& \pt && {X_y} \\
		Y && Y && \pt & \simeq & Y && \pt && \pt \\
		& Y && \pt &&&& \pt && \pt
		\arrow["f"', from=1-2, to=2-1]
		\arrow["f", from=1-2, to=2-3]
		\arrow["f"', from=1-2, to=3-2]
		\arrow["y"', from=1-4, to=2-3]
		\arrow[equals, from=1-4, to=2-5]
		\arrow[equals, from=1-4, to=3-4]
		\arrow["y"', from=1-8, to=2-7]
		\arrow[equals, from=1-8, to=2-9]
		\arrow[equals, from=1-8, to=3-8]
		\arrow["{f_y}"', from=1-10, to=2-9]
		\arrow["{f_y}", from=1-10, to=2-11]
		\arrow["{f_y}"', from=1-10, to=3-10]
		\arrow[equals, from=2-1, to=3-2]
		\arrow[equals, from=2-3, to=3-2]
		\arrow[equals, from=2-5, to=3-4]
		\arrow[equals, from=2-9, to=3-10]
		\arrow[equals, from=2-11, to=3-10]
		\arrow["y", from=3-4, to=2-3]
		\arrow["y", from=3-8, to=2-7]
		\arrow[equals, from=3-8, to=2-9]
	\end{tikzcd}\]
	This shows that the map
	\[
		c_f\colon (f_! f^* \CC_\bullet)_y \too \CC_y
	\]
	is equivalent to the map
	\[
		\qquad c_{f_y}\colon f_{y!} f_y^* \CC_y \too \CC_y.
	\]
	By \cref{F-Id}, the latter is equivalent to the map
	\[
		\CC_y[X_y] \too \CC_y.
	\]
	By \cref{constant}, this is equivalent to the colimit functor
	\[
		\colim_{X_y}\colon \CC_y^{X_y} \too \CC_y,
	\]
	which is left adjoint to the constant diagram functor, concluding the first part.
	
	For the second part, note that since $Y$ is a space, the morphism $y \to z$ is an isomorphism, so we may assume that $y = z$.
	As in the first part, \cref{F-Id} and \cref{constant} identify the square in question with
	\[\begin{tikzcd}
		{\CC_y^{X_y}} & {\DD_y^{X_y}} \\
		{\CC_y} & {\DD_y}
		\arrow["{F_y}", from=1-1, to=1-2]
		\arrow["{\colim_{X_y}}"', from=1-1, to=2-1]
		\arrow["{\colim_{X_y}}", from=1-2, to=2-2]
		\arrow["{F_y}"', from=2-1, to=2-2]
	\end{tikzcd}\]
	It is clear that this is right adjointable, as the constant diagram functor commutes with applying $F_y$, concluding the proof.
\end{proof}

We now prove our main theorem.

\begin{thm}\label{main-thm}
	The functor
	\[
		\tCatK^{(-)}\colon \cK_0^\op \too \thCatt,
	\]
	extends to a $3$-functor
	\[
		\tCatK^{(-)}\colon \Spanth(\cK_0) \too \thCatt.
	\]
\end{thm}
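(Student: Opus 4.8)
The plan is to produce the $3$-functor by means of Stefanich's universal property \cref{univ-2.5}, which identifies $3$-functors out of $\Spanth(\cK_0)$ with functors $\cK_0 \to \thCatt$ satisfying the $2$-fold left Beck--Chevalley condition. The functor I would feed in is the one obtained by restricting the $2$-functor $\tCatK^{(-)}\colon \Spanoh(\cK_0) \to \thCatt$ --- already available from \cref{univ-1.5} together with the base-change formula of \cite[Proposition 4.3.3]{HL} --- along the forward legs $\cK_0 \hookrightarrow \Spanoh(\cK_0)$. Explicitly, this sends $X$ to $\tCatK^X$ and a map $f\colon X \to Y$ to its left Kan extension $f_!\colon \tCatK^X \to \tCatK^Y$. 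Because $f_!$ is left adjoint to $f^*$ and the backward leg of a span is the right adjoint of its forward leg, any $3$-functor extending this data sends backward legs to $f^*$; hence its restriction along $\cK_0^\op \hookrightarrow \Spanth(\cK_0)$ recovers the given functor $\tCatK^{(-)}$, and it suffices to verify the $2$-fold left Beck--Chevalley condition for $f \mapsto f_!$.

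I would then treat the three clauses of $2$-fold right adjointability for the image of a pullback square separately. Clauses (1) and (2) --- that each leg $f_!$ admits the right adjoint $f^*$, and that the image square is right adjointable --- hold by construction, since they are exactly the data recorded by the $2$-functor on $\Spanoh(\cK_0)$: clause (1) is the existence of $f_!$ as left adjoint of $f^*$ (as in \cref{adjointable}, using that $\tCatK^X$ admits $\cK_0$-indexed colimits), and clause (2) is the base-change formula of \cite[Proposition 4.3.3]{HL}, equivalently the very condition enabling the $\Spanoh$-lift. Thus the entire content of the theorem is concentrated in clause (3): the square of counit maps in the $2$-category $\tEnd(\tCatK^C)$, with edges $c_{L'}, c_{g'}, c_f, c_L$, must be right adjointable.

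This is precisely the role of \cref{cf-adj}. That lemma shows that each counit $c_f\colon f_! f^* \to \Id$ is a left adjoint in $\tEnd(\tCatK^C)$, and the identical argument applies to $c_{f'}$; consequently the two vertical edges of the clause-(3) square admit right adjoints, so the square is at least a candidate for right adjointability. It then remains to check that the associated Beck--Chevalley modification is an equivalence. I would establish this by the fiberwise method already used in the proof of \cref{cf-adj}: evaluating at a point $c \in C$ and invoking \cref{constant} identifies the counit square with a square of colimit functors indexed by the fibers of the original pullback square, where right adjointability reduces to the base-change compatibility of these colimits --- that is, to clause (2) applied over $c$. The main obstacle is this final step, as it is the only genuinely new coherence: one must carefully match the abstract square of counits against its fiberwise description before the inputs from \cref{cf-adj} and \cref{constant} can be brought to bear.
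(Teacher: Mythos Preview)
Your proposal is correct and follows essentially the same route as the paper: apply \cref{univ-2.5} to the left-adjoint functoriality $f \mapsto f_!$, dispatch clauses (1) and (2) via \cref{adjointable} and \cite[Proposition 4.3.3]{HL}, invoke \cref{cf-adj} for the vertical counits in clause (3), and verify the remaining Beck--Chevalley modification by evaluating at a diagram $\CC_\bullet$ and a point $z$, then using \cref{constant} to rewrite the square as left Kan extensions along the fibers $X_z$, $Y_z$, $P_z \simeq X_z \times Y_z$. The only point where the paper is slightly more explicit than your sketch is that the fiberwise identification uses the $\Spanoh(\cK_0)$-lift (as in the proof of \cref{cf-adj}) to recognize the evaluated counits as counits for the fiber maps, after which the Beck--Chevalley condition is literally \cref{adjointable} again rather than ``clause (2) applied over $c$''---but this is exactly what you flagged as the step to be matched carefully.
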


\begin{proof}
	To avoid opposites and stay closer to Stefanich's formulation of \cref{univ-2.5}, we work with the left adjoint functoriality.
	More precisely, as explained above, we already have a lift
	\[
		\tCatK^{(-)}\colon \Spanoh(\cK_0) \too \thCatt,
	\]
	and we wish to further lift it to $\Spanth(\cK_0)$, so it suffices to check that its restriction along $\cK_0 \to \Spanoh(\cK_0)$ satisfies the $2$-fold left Beck--Chevalley condition.
	While we do not have a coherent description of this restriction similar to \cref{mate-span}, the description of adjunction data in the $2$-category of spans $\Spanoh(\cK_0)$ (see for example \cite[Proposition 3.3.1]{Stef}) shows that it is given point-wise by passing to left adjoints.
	That is, it sends an object $X \in \cK_0$ to $\tCatK^X$, a morphism $f\colon X \to Y$ to the left Kan extension $f_!\colon \tCatK^X \to \tCatK^Y$, and a $2$-morphism $f \simeq g$ to the canonical $2$-morphism $f_! \simeq g_!$.

	With this in mind, we check the $2$-fold left Beck--Chevalley condition, so let
	\[\begin{tikzcd}
		P & Y \\
		X & Z
		\arrow["{f'}", from=1-1, to=1-2]
		\arrow["{g'}"', from=1-1, to=2-1]
		\arrow["\lrcorner"{anchor=center, pos=0.125}, draw=none, from=1-1, to=2-2]
		\arrow["g", from=1-2, to=2-2]
		\arrow["f"', from=2-1, to=2-2]
	\end{tikzcd}\]
	be a pullback square in $\cK_0$, and let us denote by $h\colon P \to Z$ the composite $f g' \simeq g f'$.
	The functor in question sends this to the commutative square
	\[\begin{tikzcd}
		{\tCatK^P} & {\tCatK^Y} \\
		{\tCatK^X} & {\tCatK^Z}
		\arrow["{f'_!}", from=1-1, to=1-2]
		\arrow["{g'_!}"', from=1-1, to=2-1]
		\arrow["{g_!}", from=1-2, to=2-2]
		\arrow["{f_!}"', from=2-1, to=2-2]
	\end{tikzcd}\]
	which we need to show is $2$-fold right adjointable.
	The morphisms in the diagram admit right adjoints $f^*, f'^*, g^*, g'^*$, and the square is right adjointable by \cite[Proposition 4.3.3]{HL}.
	It remains to show that the commutative square
	\[\begin{tikzcd}
		{h_!h^*} & {f_!f^*} \\
		{g_!g^*} & {\Id_{\tCatK^Z}}
		\arrow["{c_{g'}}", from=1-1, to=1-2]
		\arrow["{c_{f'}}"', from=1-1, to=2-1]
		\arrow["{c_f}", from=1-2, to=2-2]
		\arrow["{c_g}"', from=2-1, to=2-2]
	\end{tikzcd}\]
	in $\tEnd(\tCatK^Z)$ is right adjointable.
	The fact that the vertical morphisms are left adjoints is \cref{cf-adj}, and it remains to check the Beck--Chevalley condition.

	Since the Beck--Chevalley morphism is a natural transformation, we may check that it an isomorphism on each component, i.e.\ evaluate at some $\CC_\bullet \in \tCatK^Z$.
	By the same logic, we may further evaluate at some $z \in Z$.
	Thus, we need to check the Beck--Chevalley condition for the square
	\[\begin{tikzcd}
		{(h_!h^*\CC_\bullet)_z} & {(f_!f^*\CC_\bullet)_z} \\
		{(g_!g^*\CC_\bullet)_z} & {\CC_z}
		\arrow["{c_{g'}}", from=1-1, to=1-2]
		\arrow["{c_{f'}}"', from=1-1, to=2-1]
		\arrow["{c_f}", from=1-2, to=2-2]
		\arrow["{c_g}"', from=2-1, to=2-2]
	\end{tikzcd}\]
	Let $X_z := X \times_Z \pt$ denote the fiber of $X$ over $z$, and similarly for $Y_z$ and $P_z$, and observe that $P_z \simeq X_z \times Y_z$.
	Arguing as in the proof of \cref{cf-adj} using the lift to $\Spanoh(\cK_0)$, and applying \cref{F-Id} and \cref{constant}, we see that the square is equivalent to the square
	\[\begin{tikzcd}
		{\CC_z^{X_z \times Y_z}} & {\CC_z^{X_z}} \\
		{\CC_z^{Y_z}} & {\CC_z}
		\arrow[from=1-1, to=1-2]
		\arrow[from=1-1, to=2-1]
		\arrow[from=1-2, to=2-2]
		\arrow[from=2-1, to=2-2]
	\end{tikzcd}\]
	where all morphisms are given by left Kan extension.
	This square indeed satisfies the Beck--Chevalley condition as explained in \cref{adjointable}, concluding the proof.
\end{proof}

\subsection{Consequences}

We now turn to the proof of \cref{lim-colim-intro}, showing that $\cK_0$-shaped limits and colimits coincide in $\tCatK$.
In fact, the isomorphism between them can be described explicitly via the twisted norm map, whose construction we briefly recall.
We refer the reader to \cite[Definition 2.15]{shifted} for the definition appearing here, and to \cite{TwA} for a more detailed recent account of norm maps.

\begin{defn}\label{Nm-def}
	Let $\DD$ be a category with all small limits and colimits, and let $f\colon X \to Y$ be a map of spaces.
	Consider the commutative diagram
    \[\begin{tikzcd}
        X \\
        & {X \times_Y X} & X \\
        & X & Y
        \arrow["\Delta", from=1-1, to=2-2]
        \arrow[curve={height=-18pt}, Rightarrow, no head, from=1-1, to=2-3]
        \arrow[curve={height=18pt}, Rightarrow, no head, from=1-1, to=3-2]
        \arrow["{\pi_2}", from=2-2, to=2-3]
        \arrow["{\pi_1}"', from=2-2, to=3-2]
        \arrow["\lrcorner"{anchor=center, pos=0.125}, draw=none, from=2-2, to=3-3]
        \arrow["f", from=2-3, to=3-3]
        \arrow["f"', from=3-2, to=3-3]
    \end{tikzcd}\]
    We define the \tdef{dualizing map} to be
	\[
		\mdef{D_f} := \pi_{1*} \Delta_!\colon \DD^X \too \DD^X.
	\]
    We define the twisted \tdef{norm map}
	\[
		\mdef{\Nm_f}\colon f_! D_f \too f_*
	\]
	between functors $\DD^X \to \DD^Y$ to be the composite
	\[
		f_! D_f
		:= f_! \pi_{1*} \Delta_!
		\too[u_f] f_* f^* f_! \pi_{1*} \Delta_!
		\iso f_* \pi_{2!} \pi_1^* \pi_{1*} \Delta_!
		\too[c_{\pi_1}] f_* \pi_{2!} \Delta_!
		\simeq f_*
	\]
	of the unit, Beck--Chevalley, and counit maps.

	Specializing to the case of the terminal map $X \to \pt$, the twisted norm map takes the form
	\[
		\Nm_X\colon \colim_X(D_X(-)) \too \lim_X(-).
	\]
\end{defn}

With these in place, we go back to the case $\DD = \tCatK$.

\begin{prop}\label{lim-colim}
	Let $X \in \cK_0$.
	The dualizing map is canonically isomorphic to the identity $D_X \simeq \Id_{\tCatK^X}$, and the norm map induces an equivalence
	\[
		\Nm_X\colon \colim_X \CC_\bullet \iso \lim_X \CC_\bullet
		\qin \tCatK,
	\]
	natural in diagrams $\CC_\bullet\colon X \to \tCatK$.
\end{prop}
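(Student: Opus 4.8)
The plan is to read the equivalence straight off the $3$-functor $\tCatK^{(-)}\colon \Spanth(\cK_0) \to \thCatt$ of \cref{main-thm}, by exploiting the ambidextrous self-duality of $X$ inside $\Spanth(\cK_0)$. Writing $p\colon X \to \pt$ for the structure map, I would consider the two $1$-morphisms
\[
	\eta_X = \bigl(\pt \oot[p] X = X\bigr)\colon \pt \to X,
	\qquad
	\epsilon_X = \bigl(X = X \too[p] \pt\bigr)\colon X \to \pt
\]
in $\Spanth(\cK_0)$. By \cite[Theorem 1.4]{spans} each of these spans is the reverse of the other, so $\epsilon_X$ is simultaneously a left and a right adjoint of $\eta_X$; in other words both adjoints of $\eta_X$ coincide and equal $\epsilon_X$. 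This two-sided adjunction, together with the identification of the comparison it induces with the norm map, is precisely \cite[Proposition 2.24]{shifted}.

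Next I would compute the images of these spans. In the left-adjoint normalization used to build the $3$-functor, $\tCatK^{(-),\LL}$ is covariant and sends a morphism $f$ to $f_!$; hence the forward span $\epsilon_X$ maps to $p_!\colon \tCatK^X \to \tCatK$, and by \cref{constant} this is the colimit functor $\colim_X$, while the backward span $\eta_X$ maps to its adjoint $p^* = \Delta\colon \tCatK \to \tCatK^X$, the constant-diagram functor. Since a $3$-functor preserves adjunctions of $1$-morphisms, and since both adjoints of $\eta_X$ are $\epsilon_X$, I conclude that in $\thCatt$ both the left adjoint and the right adjoint of $\Delta$ are identified with $\colim_X$.

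To finish, I would invoke uniqueness of adjoints. Classically $\Delta$ has $\colim_X$ as its left adjoint and, because $\tCatK$ is complete, $\lim_X$ as its right adjoint. The previous paragraph exhibits $\colim_X$ as a right adjoint of $\Delta$ as well, so $\colim_X \simeq \lim_X$. By \cite[Proposition 2.24]{shifted} the equivalence underlying this identification is exactly the norm map $\Nm\colon \colim_X \to \lim_X$; and since the whole statement is an equivalence of functors $\tCatK^X \to \tCatK$, evaluating at a diagram yields the asserted equivalence, automatically natural in $\CC_\bullet$.

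As for where the work lies: essentially all of it has already been done, in the construction of the $3$-functor (\cref{main-thm}) and in the span-level ambidexterity of \cite[Proposition 2.24]{shifted}, so this step is mostly assembly. The only points demanding genuine care are bookkeeping ones: tracking the variance conventions so that $\epsilon_X$ and $\eta_X$ map to $\colim_X$ and $\Delta$ rather than to their opposites, using \cref{constant} to see that $p_!$ really is the colimit functor, and—the one place I expect a subtlety—verifying that the two-sided adjunction transported from spans agrees with the classical tower $\colim_X \dashv \Delta \dashv \lim_X$ so that uniqueness of adjoints applies cleanly and the resulting comparison is the norm and not merely some equivalence.
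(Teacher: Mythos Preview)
Your proposal is correct and follows essentially the same route as the paper: transport the two-sided adjunction $\epsilon_X \dashv \eta_X \dashv \epsilon_X$ in $\Spanth(\cK_0)$ through the $3$-functor of \cref{main-thm}, deduce that both adjoints of the constant-diagram functor coincide, and invoke \cite[Proposition 2.24]{shifted} to identify the comparison with the norm. One small remark: the appeal to \cref{constant} to see that $p_!\colon \tCatK^X \to \tCatK$ is $\colim_X$ is superfluous, since $p_!$ is by definition the left adjoint to $p^* = \Delta$, i.e.\ the colimit functor; \cref{constant} concerns the identification $\CC[-] \simeq \CC^{(-),\LL}$ for objects \emph{of} $\tCatK$, which is a different (and already-used) statement.
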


\begin{proof}
	The $3$-functor from \cref{main-thm} sends the span $\pt \gets X = X$ to the constant diagram functor
	\[
		\tCatK \too \tCatK^X.
	\]
	Recall that in the $3$-category $\Spanth(\cK_0)$, this span is both left and right adjoint to the span $X = X \to \pt$.
	We thus conclude that the two adjoints of the constant diagram functor are naturally equivalent, namely that the colimit and limit functors coincide.
	Moreover, \cite[Proposition 2.24]{shifted} shows that the dualizing map is the identity, and that the equivalence is given by the norm map.
\end{proof}

As an example, this reproves the case of presentable categories, only by knowing that the inclusion $\PrL \subset \CAT$ preserves limits (i.e.\ without knowledge about colimits).

\begin{prop}\label{prl}
	Let $X$ be a (small) space.
	The dualizing map is canonically isomorphic to the identity $D_X \simeq \Id_{(\PrL)^X}$, and the norm map induces an equivalence
	\[
		\Nm_X\colon \colim_X \CC_\bullet \iso \lim_X \CC_\bullet
		\qin \PrL
	\]
	natural in diagrams $\CC_\bullet\colon X \to \PrL$.
\end{prop}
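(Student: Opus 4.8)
The plan is to deduce \cref{prl} from \cref{lim-colim} by instantiating the latter at the largest possible parameters and then descending along the full inclusion $\PrL \subset \CATall$. Concretely, I would take $\cK_0 = \Spaces$ to be all (small) spaces and $\cK = \Cat$ to be all (small) categories, so that $\CatK = \CATall$ is the category of cocomplete categories and cocontinuous functors. A presentable category is in particular cocomplete, and a cocontinuous functor between presentable categories is automatically a left adjoint, so $\PrL$ is a \emph{full} subcategory of $\CATall$. Applied to this case, \cref{lim-colim} provides, for every small space $X$ and every diagram $\CC_\bullet\colon X \to \PrL$ viewed as a diagram in $\CATall$, a norm equivalence $\colim_X \CC_\bullet \iso \lim_X \CC_\bullet$ in $\CATall$, natural in $\CC_\bullet$. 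It then remains to check that both sides compute the corresponding limit and colimit already in $\PrL$.

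For the limit I would invoke the one external input highlighted in the introduction: the inclusion $\PrL \subset \CAT$ preserves limits, so that the limit of $\CC_\bullet$ formed in $\CAT$ is again presentable and is the limit in $\PrL$. Since the forgetful functor $\CATall \to \CAT$ also preserves limits (limits of cocomplete categories are created on underlying categories, as part of the general theory of $\CatK$ already in use here), the object $\lim_X \CC_\bullet$ formed in $\CATall$ agrees with the one formed in $\CAT$; hence it lies in $\PrL$ and there coincides with $\lim_X \CC_\bullet$ computed in $\PrL$.

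For the colimit, no separate computation is needed. By the norm equivalence, $\colim_X \CC_\bullet$ is equivalent in $\CATall$ to $\lim_X \CC_\bullet$, which we have just placed in the full subcategory $\PrL$. I would then use the general principle that a colimit computed in an ambient category, if it happens to lie in a full subcategory, automatically represents the colimit there: for any $\DD \in \PrL$, fullness of $\PrL \subset \CATall$ gives
\[
\Map_{\PrL}(\colim_X \CC_\bullet, \DD)
\simeq \Map_{\CATall}(\colim_X \CC_\bullet, \DD)
\simeq \lim_{x \in X} \Map_{\CATall}(\CC_x, \DD)
\simeq \lim_{x \in X} \Map_{\PrL}(\CC_x, \DD),
\]
which is precisely the universal property of $\colim_X \CC_\bullet$ in $\PrL$. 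The identical remark applies to the limit side. Consequently the $\CATall$-norm equivalence descends to a natural equivalence $\colim_X \CC_\bullet \iso \lim_X \CC_\bullet$ in $\PrL$.

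I expect the only delicate point to be the limit step, namely knowing that limits of presentable categories are computed in $\CAT$ and remain presentable. This is the classical fact of Lurie, and the whole point of the present argument is that we do not also need his dual description of colimits in $\PrL$: that half is supplied for free by the span-theoretic \cref{lim-colim}.
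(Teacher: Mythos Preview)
Your proposal is correct and follows essentially the same route as the paper's proof: instantiate \cref{lim-colim} at $\cK_0 = \Spaces$, $\cK = \Cat$ to get the norm equivalence in $\CATall$, use that $\PrL \subset \CAT$ and $\CATall \subset \CAT$ both create limits to place $\lim_X \CC_\bullet$ in $\PrL$, and then use fullness of $\PrL \subset \CATall$ to conclude that the $\CATall$-colimit is also the $\PrL$-colimit. Your explicit mapping-space verification of the last step and your emphasis that only the limit half of Lurie's result is needed match the paper's argument exactly.
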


\begin{proof}
	Consider the diagram as landing in the larger category $\PrL \subset \CATall$ of all cocomplete categories and colimit preserving functors.
	By applying \cref{lim-colim} to the large categories version with $\cK_0 = \Spaces$ and $\cK = \Cat$, the corresponding statement holds in $\CATall$.
	It thus suffices to show that the inclusion $\PrL \subset \CATall$ preserves these limits and colimits. 

	For the limit, recall that the inclusion $\PrL \subset \CAT$ preserves limits by \cite[Proposition 5.5.3.13]{HTT}, and so does the inclusion $\CATall \subset \CAT$ by \cite[Corollary 5.3.6.10]{HTT}.
	Thus, the inclusion $\PrL \subset \CATall$ preserves limits.

	Now, as the colimit $\colim_X \CC_\bullet$ computed in $\CATall$ is equivalent to the limit $\lim_X \CC_\bullet$, we learn that it lands in $\PrL$.
	Therefore, since $\PrL \subset \CATall$ is a full subcategory, the same cocone exhibits it as the colimit in $\PrL$, as required.
\end{proof}

	\bibliographystyle{alpha}
	\bibliography{refs}

\end{document}